\newcommand{\R}{\mathbb{R}}
\newcommand{\C}{\mathbb{C}}
\newcommand{\D}{\mathbb{D}}
\newtheorem{theorem}{Theorem}[section]
\newtheorem{lemma}{Lemma}[section]
\newtheorem{proposition}{Proposition}[section]
\begin{document}
\title{Iterative Variable-Blaschke factorization}
\author{MAXIME LUKIANCHIKOV}
\author{VLADYSLAV NAZARCHUK}
\author{CHRISTOPHER XUE}

\address[Maxime Lukianchikov]{Department of Mathematics, Yale University, 10 Hillhouse Avenue, New Haven, CT 06511}
\email{maxime.lukianchikov@yale.edu}
\address[Vladyslav Nazarchuk]{Department of Mathematics, Yale University, 10 Hillhouse Avenue, New Haven, CT 06511}
\email{vladyslav.nazarchuk@yale.edu}
\address[Christopher Xue]{Department of Mathematics, Yale University, 10 Hillhouse Avenue, New Haven, CT 06511}
\email{christopher.xue@yale.edu}

\subjclass[2010]{30A10, 30B50}
\keywords{Blaschke factorization, Unwinding series, Dirichlet space}

\begin{abstract}
    Blaschke factorization allows us to write any holomorphic function $F$ as a formal series
        \[ F = a_0 B_0 + a_1 B_0 B_1 + a_2 B_0 B_1 B_2 + \cdots \]
    \noindent{where $a_i \in \C$ and $B_i$ is a Blaschke product. We introduce a more general variation of the canonical Blaschke product and study the resulting  formal series. We prove that the series converges exponentially in the Dirichlet space given a suitable choice of parameters if $F$ is a polynomial and we provide explicit conditions under which this convergence can occur. Finally, we derive analogous properties of Blaschke factorization using our new variable framework.}
\end{abstract}

\maketitle

\section{Introduction} \label{sec: Introduction}

    \subsection{Fourier Series.}

    \noindent{Any analytic function $F$ can be expressed as a power series}  
        \[F(z)=\sum_{k=1}^{\infty}a_k z^k \textrm{\hspace{10mm}} a_k\in \mathbb{C}.\]
    \noindent{Restricting this power series to the boundary of the unit disk in the complex plane results in a classical Fourier series. A dynamical way of constructing this power series, analogous to that which will be employed later, is as follows. Firstly, write $F(z)$ as}
        \[F(z)=F(0)+(F(z)-F(0)).\]
    \noindent{Notice that this guarantees that the function $F(z)-F(0)$ has at least one root at 0. Factoring this root yields}
        \[F(z)=F(0)+z \cdot G_1(z) \]
    \noindent{where $G_1(z)$ is holomorphic. Repeating the same process with $G_1(z)$ yields}
        \begin{align*}
           F(z)&=F(0)+z(G_1(0)+(G_1(z)-G_1(0)) \\ &=F(0)+z(G_1(0)+z(G_2(z)))
        \end{align*}
    \noindent{where $G_2(z)$ is once again analytic. Indeed, one could iterate this procedure ad infinitum (unless $F$ is a polynomial, in which case only a finite number of iterations would be needed), resulting in the formal series}
        \begin{align*}
            F(z) &= F(0)+z(G_1(0)+z(G_2(0)+z(G_3(0)+\cdots \\
            &= F(0)+G_1(0)z+G_2(0)z^2+G_3(0)z^3+\cdots.
        \end{align*}
    
    \subsection{Blaschke Factorization}
    
        \noindent{Iterative Blaschke factorization is the natural nonlinear analogue of Fourier series. Let $F$ be a polynomial in the complex plane. Then, any $F$ can be decomposed as $F(z) - F(0) = B \cdot G$ where}
            \[ B(z) = \prod_{i \in I} \frac{z - \alpha_i}{1 - \overline{\alpha_i} z} \]
        \noindent{is a \textit{Blaschke product} with zeros $\alpha_1,\alpha_2,\ldots$ inside the unit disk $\D$ and $G$ has no roots in $\D$; that is,}
            \[ G(z) = \prod_{|\alpha_i| > 1} \left( z - \alpha_i \right) \prod_{0 < |\alpha_i| < 1} \left( 1 - \overline{\alpha_i}z \right). \]
        
        \noindent{Blaschke factorization is well-defined for any bounded analytic function (see \cite{Garnett}). However, we restrict our study to polynomials. Since holomorphic functions can be approximated by polynomials up to arbitrarily small error on any compact domain and since Coifman, Steinerberger, and Wu showed in \cite{CoifmanStefanWu} that functions of intrinsic-mode type (a classical model for signals) behave essentially like holomorphic functions, we expect that our results have more general extensions in both pure and applied areas. \\}
        
        \noindent{This Blaschke factorization can be repeated iteratively in a similar dynamical framework as the Fourier series. Since the function $G_i(z) - G_i(0)$ has at least one root within the unit disk, it will yield the nontrivial Blaschke factorization $G_i(z) - G_i(0) = B_{i+1} \cdot G_{i+1}$. Therefore, we write}
            \begin{align*}
                F - F(0) = B_0 \cdot G_0 &= B_0 \left( G_0(0) + \left( G_0(z) - G_0(0) \right) \right) \\
                &= B_0 \left( G_0(0) + B_1 G_1 \right) \\
                &= B_0 \left( G_0(0) + B_1 \left( G_1(0) + \left( G_1(z) - G_1(0) \right) \right) \right) \\
                &= B_0 \left( G_0(0) + B_1 \left( G_1(0) + B_2 G_2 \right) \right) \\
                &= B_0 \left( G_0(0) + B_1 \left( G_1(0) + B_2 \left( G_2(0) + \cdots \right) \right) \right).
            \end{align*}
        \noindent{Substituting $G_i(0) = a_i$ in the factorization yields the formal unwinding series, first described in the PhD thesis of Nahon \cite{Nahon}:}
            \[ F(z) - F(0) = a_0 B_0 + a_1 B_0 B_1 + a_2 B_0 B_1 B_2 + a_3 B_0 B_1 B_2 B_3 \cdots. \]
        \noindent{Although the classic Fourier series works well, it has a variety of limitations: most pressingly, convergence can be numerically slow, especially for discontinuous or multiscale functions. On the other hand, numerical experiments suggest that $r$-Blaschke factorization grants extremely fast convergence (see example in \ref{subsection: example}).}
    
    \subsection{Related Work}
        
        \noindent{It may not be numerically feasible to calculate the roots of any given function $F$. However, findings by Guido and Mary Weiss in \cite{Weiss} allow us to numerically obtain the Blaschke product in a stable manner. The stability of this algorithm has also been investigated by Nahon \cite{Nahon} and Letelier and Saito \cite{Letelier}. Additionally, a number of potential applications of Blaschke factorization have been reviewed, such as electrocardiography signal analysis by Tan, Zhang, and Wu \cite{HauTiengECG}, study of acoustic underwater scattering by Letelier and Saito in \cite{Letelier}, and study of the Doppler Effect by Healy \cite{Doppler}. Furthermore, Qian has extensively studied adaptive Fourier decomposition and its connections to Complex Analysis and Approximation Theory (see \cite{Tao1}, \cite{Tao2}, \cite{Tao3}, \cite{Tao4}, \cite{Tao5}, \cite{Tao6}).}

    \subsection{Example.} \label{subsection: example}
        
        \noindent{Nahon showed in \cite{Nahon} that the formal series converges and is exact after $n$ steps for a polynomial of degree $n$. However, numerical experiments suggest that the unwinding series is a reliable approximation even with fewer than $n$ terms. For instance, consider the polynomial} 
            \[ F(z) = (z - 0.2 - 0.6 i) (z + 0.3 - 0.4 i) (z + 0.5 i) (z - 0.7 + 0.9 i) \]
        \noindent{plotted as $F(e^{it})$ against approximations of $F$ by adding successive terms of the Blaschke unwinding series. \\}
        
    \begin{figure}[h]
        \begin{center} \label{fig: Successive Approximations}
        \begin{minipage}{.4\textwidth}
            \centering
            \includegraphics[width=5.0cm]{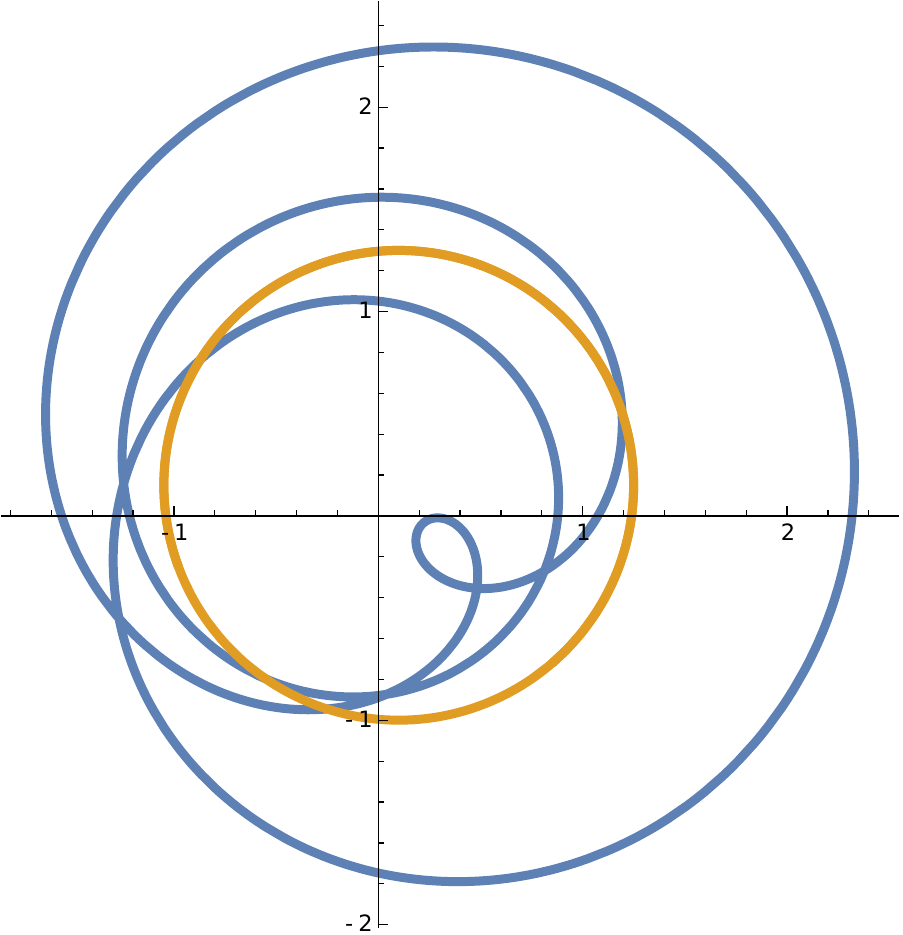}
            \begin{center} $F(0) + a_0B_0$ \end{center}
        \end{minipage}
        \begin{minipage}{.4\textwidth}
            \centering
            \includegraphics[width=5.0cm]{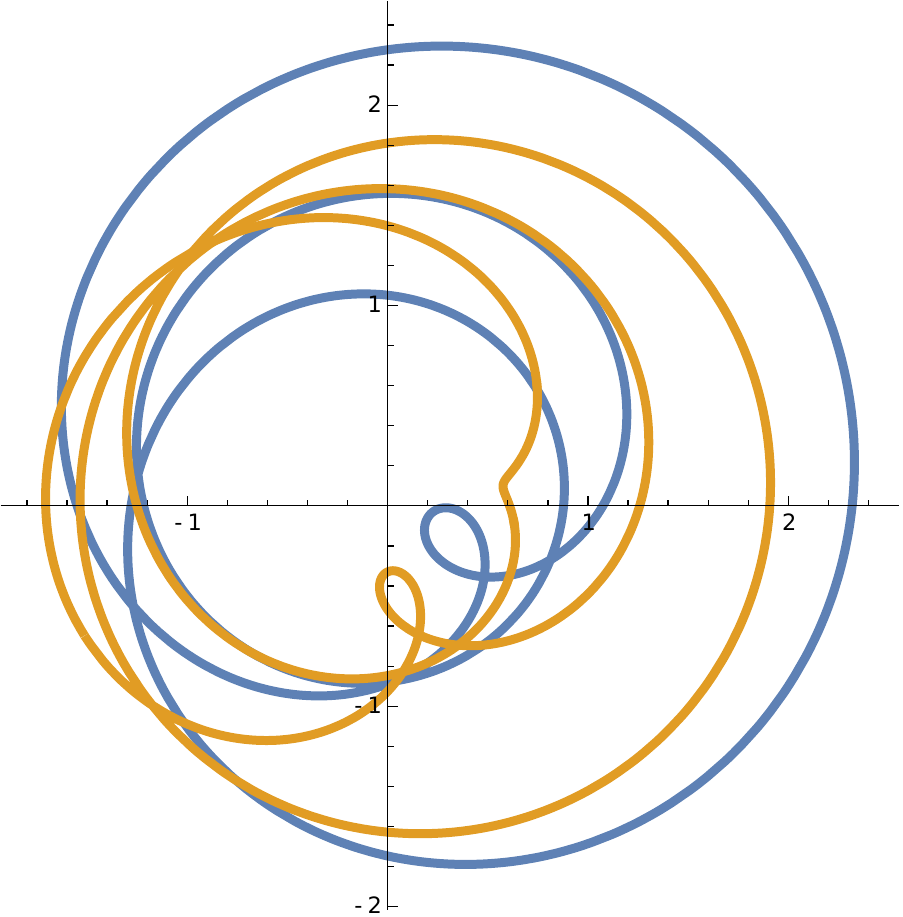}
            \begin{center} $F(0) + a_0B_0 + a_1B_0B_1$ \end{center}
        \end{minipage}
        \end{center}
        \begin{center}
        \begin{minipage}{.4\textwidth}
            \centering
            \includegraphics[width=5.0cm]{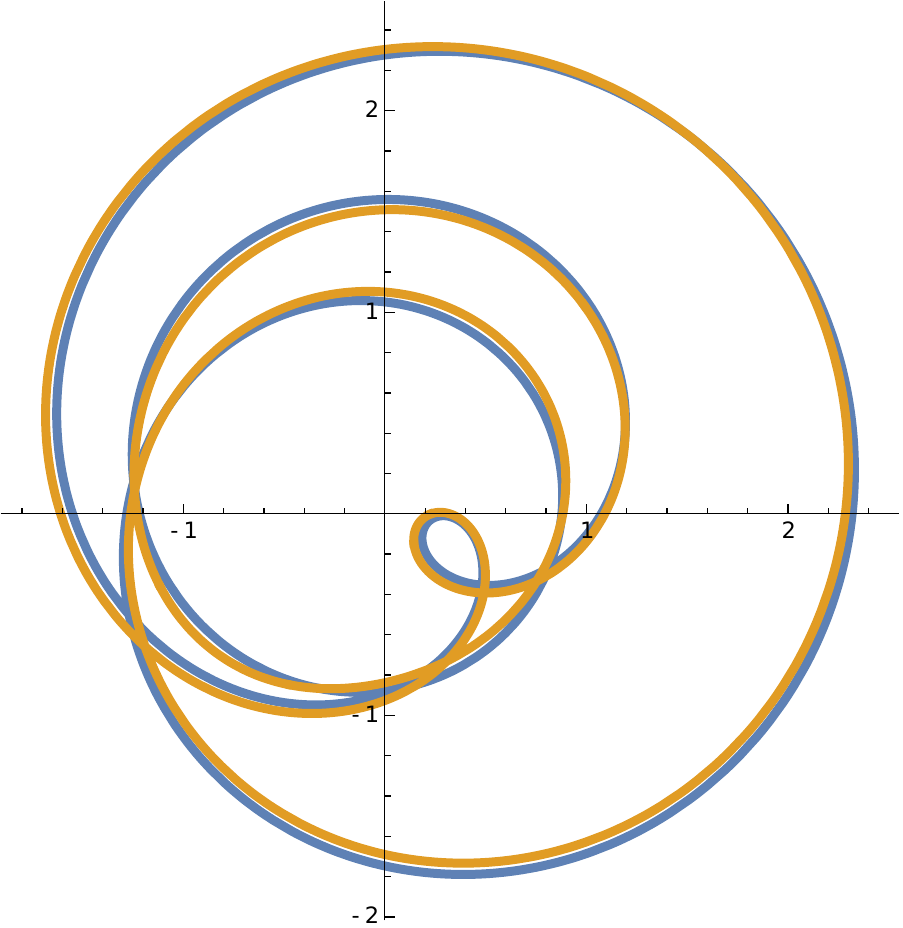}
            \begin{center} $F(0) + a_0B_0 + a_1B_0B_1 + $\\ $a_2 B_0 B_1 B_2$ \end{center}
        \end{minipage}
        \begin{minipage}{.40\textwidth}
            \centering
            \includegraphics[width=5.0cm]{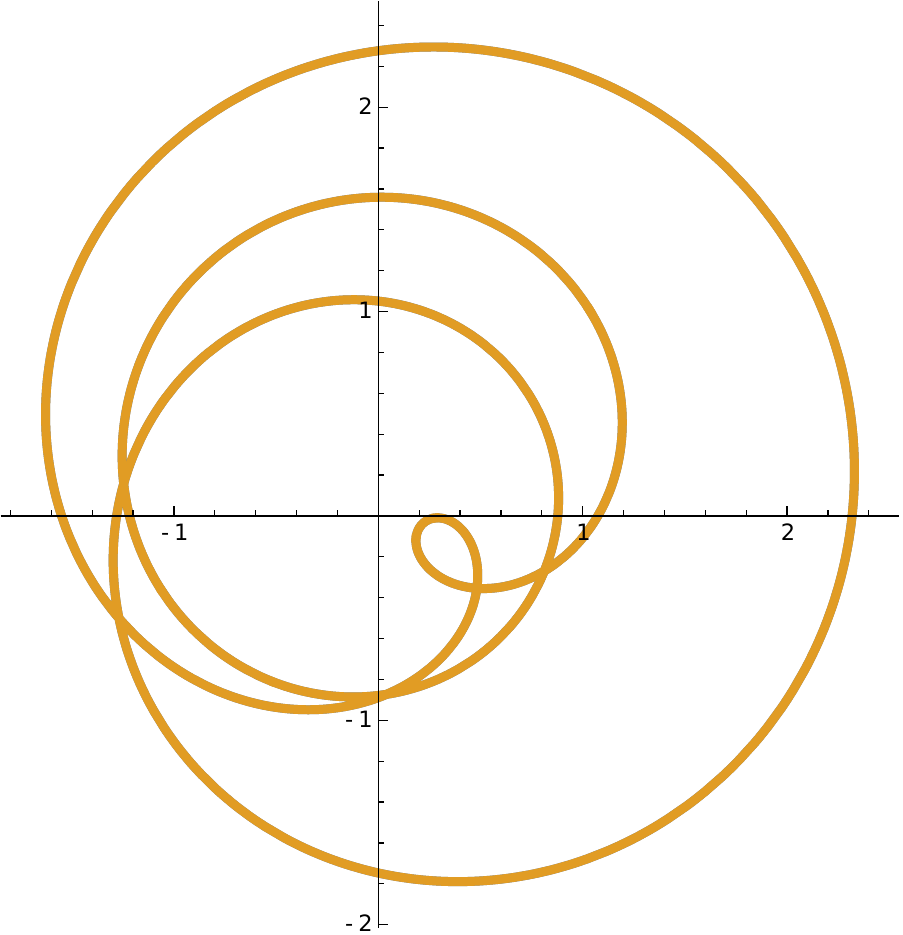}
            \begin{center} $F(0) + a_0B_0 + a_1B_0B_1 + $\\ $a_2 B_0 B_1 B_2 + a_3 B_0 B_1 B_2 B_3$ \end{center}
        \end{minipage}
        \end{center}
    \caption{In blue, we graph the parametrization of $F$ given by $(\textrm{Re}[F(e^{it})],\textrm{Im}[F(e^{it}])$. In orange, we plot approximations of $F$ by adding successive terms of the Blaschke unwinding series for each new illustration.}
    \end{figure}\vspace{3mm}
    
    \noindent{Qualitatively, even three terms of the unwinding series, $F(0) + a_0B_0 + a_1B_0B_1 + a_2B_0B_1B_2$, approximate the function with near-perfect accuracy, even though $\deg(F) = 4$. We now introduce a variation of canonical Blaschke products that improves successive approximations by increasing root capture during factorization.}

\subsection{$\boldsymbol{r}$-Blaschke Factorization}

    \noindent{We now consider factoring out the roots of $G_i(z) - G_i(0)$ contained inside of a disk of radius $r$. Once again, let $F$ be a polynomial in the complex plane. Then, any $F$ can be decomposed as $F(z) - F(0) = B_{r} \cdot G$ where}
        \[ B_r(z) =  \prod_{i \in I} \frac{\left( z - \alpha_i \right) r}{r^2 - \overline{\alpha_i} z} \]
     \noindent{is an $r$-Blaschke product with zeros $\alpha_1,\alpha_2,\ldots$ inside of $\D_r$ and $G$ has no roots inside the disk of radius $r$; that is,}
        \[ G(z) = \prod_{|\alpha_i| > r} (z-\alpha_i) \prod_{0 < |\alpha_i| < r} \frac{1}{r} \left( r^2 - \overline{\alpha_i} z \right). \]
    \noindent{Similarly as before, iterating this new factorization yields the formal variable unwinding series}
        \[ F(z) - F(0) = a_0B_{r_0} + a_1B_{r_0}B_{r_1} + a_2B_{r_0}B_{r_1}B_{r_2} + \cdots. \]
    \noindent{It is important to motivate the construction of $G(z)$. In $r$-Blaschke factorization, all roots $\alpha_i \in \D_r$ are inverted across the boundary $\partial \D_r$ when $r$ is sufficiently large (we say the roots are \textit{captured}). Formally, this root movement is described by}
        \[ \alpha_i \mapsto \frac{r^2}{\overline{\alpha_i}} \]
    \noindent{when $|\alpha_i| < r$. Thus, for each root $\alpha_i$ in the Blaschke product $B_r(z)$, a new root $r^2/\overline{\alpha_i}$ is appended to $G(z)$. In the canonical $1$-Blaschke case, all roots $\alpha_i \in \D$ are inverted across the boundary of the unit disk to form new roots $1/\overline{\alpha_i}$. In the following figure, the left-hand illustration depicts roots captured in $\D$ while the right-hand illustration depicts these same roots captured in $\D_r$.} 
    
    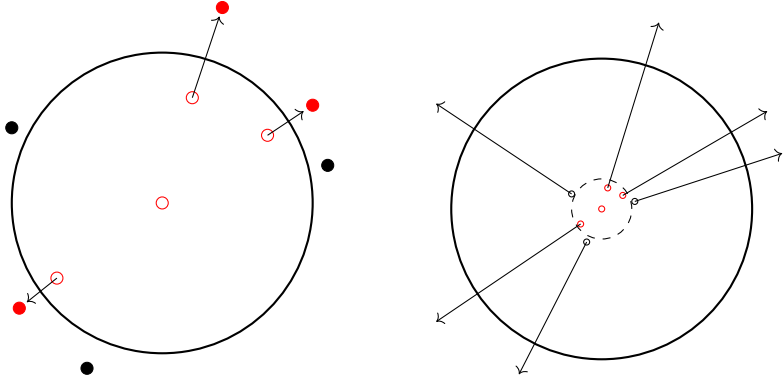
\begin{figure}[h]
            \centering
            \begin{tikzpicture}[scale=2.0]

            \draw [thick] (0,0) circle (1cm);
            
                \filldraw [black] (-1/2,-1.1) circle (0.04cm);
                \filldraw [black] (-1,1/2) circle (0.04cm);
                \filldraw [black] (1.1,0.25) circle (0.04cm);
            
                \draw [red] (0.2,0.7) circle (0.04cm);
                \filldraw [red] (0.4,1.3) circle (0.04cm);
                \draw [->] (0.2,0.7) -- (0.38,1.24);
                
                \draw[red] (0,0) circle (0.04cm);
                
                \draw [red] (0.7,0.45) circle (0.04cm);
                \filldraw [red] (1.0,0.65) circle (0.04cm);
                \draw [->] (0.7,0.45) -- (0.94,0.61);
                
                \draw [red] (-0.7,-0.5) circle (0.04cm);
                \filldraw [red] (-0.95, -0.7) circle (0.04cm);
                \draw [->] (-0.7,-0.5) -- (-0.9, -0.66);
            \end{tikzpicture}
            \begin{tikzpicture}[scale=1.0]
                \draw[white] (-0.5,0) circle (0.05cm);
                \draw[white] (0.5,0) circle (0.05cm);
            \end{tikzpicture}
            \begin{tikzpicture}[scale=2.0]
            \draw [thick] (0,0) circle (1cm);
            \draw [dashed] (0,0) circle (0.2cm);
            
                \draw [black] (-1/10,-0.22) circle (0.02cm);
                \draw [black] (-0.2,1/10) circle (0.02cm);
                \draw [black] (0.22,0.05) circle (0.02cm);
            
                \draw [red] (0.04,0.14) circle (0.02cm);
                \draw [red] (0.14,0.09) circle (0.02cm);
                \draw [red] (-0.14,-0.1) circle (0.02cm);
                \draw [red] (0,0) circle (0.02cm);
                
                \draw [->] (0.04,0.14) -- (0.38,1.24);
                \draw [->] (0.14,0.09) -- (1.1,0.65);
                \draw [->] (-0.14,-0.1) -- (-1.1, -0.75);
                
                \draw [->] (-1/10,-0.22) -- (-0.55,-1.1);
                \draw [->] (-0.2,1/10) -- (-1.1,0.7);
                \draw [->] (0.22,0.05) -- (1.20, 0.37);
            \end{tikzpicture}
        \caption{Variable-Blaschke obtains exponential convergence by increasing root capture. As roots shrink, ie. $\alpha_i \rightarrow 0$, the function behaves as $z^n$ and converges more quickly.}
        \end{figure} \label{img: Shrinking Roots}

\noindent{Additionally, note that the factor $1/r$ in $G(z)$ is used to preserve unit norm on the boundary $\partial \D_r$ so that}
    \[ \left| B_r(r) \right| = \prod_{i \in I} \left| \frac{\left( r - \alpha_i \right) r}{r^2 - \overline{\alpha_i} r} \right| = \prod_{i \in I} \left| \frac{r - \alpha_i}{r - \overline{\alpha_i}} \right| = 1. \]

\section{Statement of Main Results} \label{sec: Statement of Results}

\subsection{A Scaling Symmetry}

    \noindent{The main goal of this paper is to show how Variable-Blaschke can afford exponential convergence in the unwinding series. To begin, we develop various equivalences between $r$-Blaschke factorization on a general polynomial and canonical Blaschke factorization on a scaled version of that polynomial.} 
    
    \begin{proposition}[Blaschke and $r$-Blaschke Equivalence] \label{Blaschke-Blaschke Equivalency}
        \noindent{\textit{Let the polynomials $F(z)$ and $F_{\lambda}(z)$ be given by}}
            \[F(z)=\prod_{i=1}^{n}(z-\alpha_i) \textrm{\hspace{15mm}} F_{\lambda}(z) = \prod_{i=1}^{n} \left( z - \frac{\alpha_i}{\lambda} \right)\]
        \noindent{\textit{and let $\lambda \in \mathbb{R}$ satisfy both $\lambda > 1$ and $\max \{ |\alpha_i| \} < \sqrt{\lambda}$. Then, the Blaschke factorizations $F_{\lambda}(z)=B_1(z) \cdot G_{\lambda}(z)$ and $F(z)=B_{\sqrt{\lambda}}(z) \cdot G(z)$ satisfy}}
            \[ \sqrt{\lambda^n} \cdot G_{\lambda}(z) = G(z).\]
        \textit{\noindent{Alternatively, without any assumptions regarding the norms of the roots, the Blaschke factorizations $F_{\lambda}(z)=B_1(z) \cdot G_{\lambda}(z)$ and $F(z)=B_{\lambda}(z) \cdot G(z)$ satisfy}}    
            \[\lambda^n \cdot G_{\lambda}(z)=G(\lambda z).\]
    \end{proposition}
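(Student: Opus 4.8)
The plan is to deduce both identities from a single rescaling principle together with the fact that the remainder in a Blaschke factorization is determined once the Blaschke product is fixed. The principle is the elementary polynomial identity
\[
 F(\lambda z) = \prod_{i=1}^{n}(\lambda z - \alpha_i) = \lambda^{n}\prod_{i=1}^{n}\Bigl(z - \tfrac{\alpha_i}{\lambda}\Bigr) = \lambda^{n} F_{\lambda}(z),
\]
which says that $F_{\lambda}$ is precisely the monic polynomial obtained by pulling $F$ back through the dilation $z \mapsto \lambda z$. Its companion observation is that the same dilation identifies the two Blaschke products: since $\alpha_i \in \D_{\lambda}$ if and only if $\alpha_i/\lambda \in \D_{1}$, a one-line computation gives
\[
 B_{\lambda}(\lambda z) = \prod_{|\alpha_i|<\lambda} \frac{(\lambda z - \alpha_i)\lambda}{\lambda^{2} - \overline{\alpha_i}\lambda z} = \prod_{|\alpha_i|<\lambda} \frac{z - \alpha_i/\lambda}{1 - \overline{(\alpha_i/\lambda)}\,z} = B_{1}(z),
\]
where $B_{1}$ is the canonical Blaschke product of $F_{\lambda}$.

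Granting these two facts, the second identity follows in one line: writing $G = F/B_{\lambda}$ and $G_{\lambda} = F_{\lambda}/B_{1}$ for the respective remainders, $G(\lambda z) = F(\lambda z)/B_{\lambda}(\lambda z) = \lambda^{n} F_{\lambda}(z)/B_{1}(z) = \lambda^{n} G_{\lambda}(z)$. The same conclusion can be obtained directly from the product formula for $G$ after substituting $z \mapsto \lambda z$: each uncaptured factor $\lambda z - \alpha_i$ releases a power of $\lambda$ as $\lambda(z-\alpha_i/\lambda)$, each captured factor $\tfrac1\lambda(\lambda^{2}-\overline{\alpha_i}\lambda z)$ collapses to $\lambda\bigl(1-\overline{(\alpha_i/\lambda)}z\bigr)$, and since captured and uncaptured roots together number $n$, a global factor $\lambda^{n}$ emerges, leaving exactly $G_{\lambda}(z)$.

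For the first identity the hypotheses $\lambda>1$ and $\max_i|\alpha_i|<\sqrt{\lambda}$ are present precisely to force \emph{every} root to be captured in both relevant factorizations: $|\alpha_i|<\sqrt{\lambda}$ places $\alpha_i$ inside $\D_{\sqrt{\lambda}}$, so it is captured in the $\sqrt{\lambda}$-Blaschke factorization of $F$, while $|\alpha_i/\lambda| < \sqrt{\lambda}/\lambda = \lambda^{-1/2}<1$ places $\alpha_i/\lambda$ inside $\D_{1}$, so it is captured in the canonical factorization of $F_{\lambda}$. Hence the uncaptured products are empty on both sides, the remainder formulas collapse to $G(z) = \lambda^{-n/2}\prod_{i=1}^{n}(\lambda-\overline{\alpha_i}z)$ and $G_{\lambda}(z) = \lambda^{-n}\prod_{i=1}^{n}(\lambda-\overline{\alpha_i}z)$, and dividing yields $G(z) = \lambda^{n/2}G_{\lambda}(z) = \sqrt{\lambda^{n}}\,G_{\lambda}(z)$.

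The computations themselves are routine; the one genuinely delicate point --- and the natural place for an error to creep in --- is the bookkeeping that matches the captured/uncaptured partition of the roots of $F$ with that of the roots of $F_{\lambda}$, and, for the first identity, the verification that the norm hypothesis really does empty \emph{both} uncaptured products simultaneously. For full rigor one should also dispose of the degenerate configurations that the definition of $G$ treats separately: a root lying on the relevant circle (a non-generic case that can be removed by continuity) and a root at the origin (which, with the natural convention of retaining it as an explicit factor of $z$, contributes a common monomial to both sides and is therefore harmless).
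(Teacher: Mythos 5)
Your argument is correct, but it reaches the proposition by a different route than the paper. The paper gives no direct proof at all: it obtains the proposition as the $\gamma=1$ specialization of Theorem 4.1 (Generalized Operator Equivalence), whose proof consists of precisely the kind of explicit product manipulations you perform --- splitting the roots into captured and uncaptured ones and pulling scalar factors of the form $\lambda/\gamma$ through the product. Your packaging is self-contained and arguably cleaner: the dilation identity $F(\lambda z)=\lambda^{n}F_{\lambda}(z)$ together with the intertwining $B_{\lambda}(\lambda z)=B_{1}(z)$ yields the second relation in one line, and the explicit remainder formulas in the all-roots-captured regime yield the first; by contrast, extracting the $\sqrt{\lambda}$-statement from Theorem 4.1 with $\gamma=1$ requires a further change of variables that the paper leaves implicit, so your direct verification is in fact more complete than the paper's one-line citation. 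What the paper's route buys is the more general statement relating $\gamma$- and $\lambda$-Blaschke factorizations for an arbitrary pair of radii, of which the proposition is then a corollary. One caution on your closing remark about roots at the origin: with the paper's literal formula for $B_{r}$, a root $\alpha_i=0$ contributes the factor $z/r$ to $B_{r}$, and if $G$ is understood as the quotient $F/B_{r}$ then both identities do survive zero roots; under your alternative convention of keeping the bare factor $z$ inside the Blaschke product one gets $B_{\lambda}(\lambda z)=\lambda^{n_{0}}B_{1}(z)$ when there are $n_{0}$ roots at the origin, and the constants $\lambda^{n}$ and $\sqrt{\lambda^{n}}$ would have to change accordingly, so that case is ``harmless'' only under the former convention.
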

        
        \label{thm: ShrunkRoots}
        
    \noindent{This equation provides an equivalence between canonical Blaschke factorization and $r$-Blaschke factorization. Additionally, it formalizes a notion paramount to Variable-Blascke factorization: shrinking the roots of a function is equivalent to performing Blaschke on a larger radius, as depicted in the following figure.}
    
     \label{fig: Scaling Equivalency}
    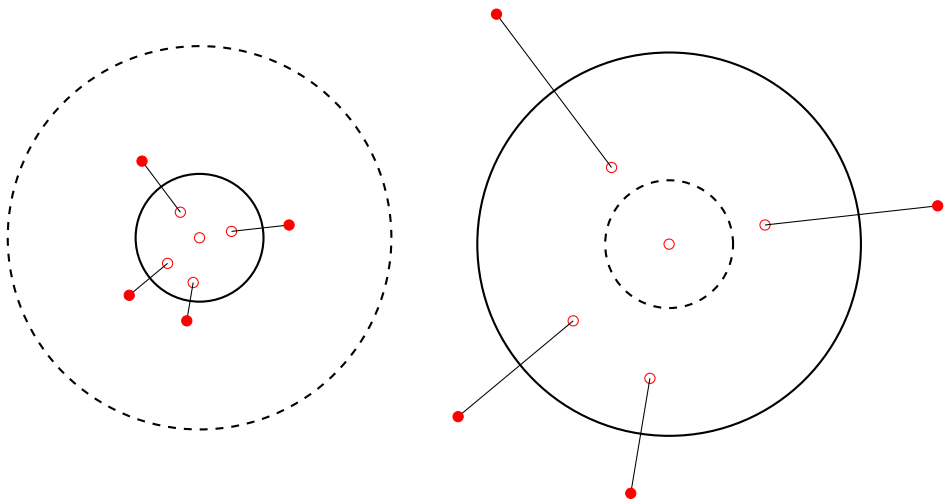
\begin{figure}[h]
            \centering
            \begin{tikzpicture}[scale=0.85]
                \draw[thick] (0,0) circle (1.0cm);
                \draw[thick,dashed] (0,0) circle (3.0cm);
                
                \draw[red] (0,0) circle (0.08cm);
                
                \draw[red] (0.5,0.1) circle (0.08cm);
                \draw[red] (-0.3,0.4) circle (0.08cm);
                \draw[red] (-0.1,-0.7) circle (0.08cm);
                \draw[red] (-0.5,-0.4) circle (0.08cm);
                
                \draw[] (0.5,0.1) -- (1.4,0.2);
                \draw[] (-0.3,0.4) -- (-0.9,1.2);
                \draw[] (-0.1,-0.7) -- (-0.2,-1.3);
                \draw[] (-0.5,-0.4) -- (-1.1,-0.9);
                
                \filldraw[red] (1.4,0.2) circle (0.08cm);
                \filldraw[red] (-0.9,1.2) circle (0.08cm);
                \filldraw[red] (-0.2,-1.3) circle (0.08cm);
                \filldraw[red] (-1.1,-0.9) circle (0.08cm);
                
                
                \draw[white] (0,-4.0) circle (0.08cm);
            \end{tikzpicture}
            \begin{tikzpicture}[scale=0.5]
                \draw[white] (-0.5,0) circle (0.05cm);
                \draw[white] (0.5,0) circle (0.05cm);
            \end{tikzpicture}
            \begin{tikzpicture}[scale=0.85]
                \draw[thick,dashed] (0,0) circle (1.0cm);
                \draw[thick] (0,0) circle (3.0cm);
                
                \draw[red] (0,0) circle (0.08cm);
                
                \draw[red] (1.5,0.3) circle (0.08cm);
                \draw[red] (-0.9,1.2) circle (0.08cm);
                \draw[red] (-0.3,-2.1) circle (0.08cm);
                \draw[red] (-1.5,-1.2) circle (0.08cm);
                
                \draw[] (1.5,0.3) -- (4.2,0.6);
                \draw[] (-0.9,1.2) -- (-2.7,3.6);
                \draw[] (-0.3,-2.1) -- (-0.6,-3.9);
                \draw[] (-1.5,-1.2) -- (-3.3,-2.7);
                
                \filldraw[red] (4.2,0.6) circle (0.08cm);
                \filldraw[red] (-2.7,3.6) circle (0.08cm);
                \filldraw[red] (-0.6,-3.9) circle (0.08cm);
                \filldraw[red] (-3.3,-2.7) circle (0.08cm);
                
            \end{tikzpicture}
        \caption{(Left) Canonical Blaschke factorization performed on the roots $\alpha_i/\lambda$ versus (Right) Variable-Blaschke factorization for the radius $r = \sqrt{\lambda}$ performed on the roots $\alpha_i$. The filled circles represent the radius over which Blaschke is performed; the dashed circles are included to show the radii relative to one another.}
        \end{figure}
    
\subsection{Dirichlet Space}

     \noindent{We specify that exponential convergence occurs in the Dirichlet space $\mathcal{D}$. We define the Dirichlet space on the domain $\Omega \subseteq \C$ as the subspace of the Hardy space $\mathcal{H}^2(\Omega)$ containing all functions}
        \[ f(z) = \sum_{n = 0}^{\infty} a_n z^n \textrm{\hspace{5mm}} (f \in \mathcal{D}) \]
    \noindent{for which the Dirichlet norm}
        \[ \| f(z) \|_{\mathcal{D}}^2 = \|f(z) - f(0) \|_{\mathcal{D}}^2 = \sum_{n=1}^{\infty} n |a_n|^2 \]
    \noindent{is finite. The Dirichlet norm is a useful metric because it is insensitive to constants. If the norms of successive $G_i(z)$ in a regular iterative Blaschke decomposition satisfy}
        \[ \left\| G_{i}(z) - G_i(0) \right\|_{\mathcal{D}}^2 \leq \frac{1}{2} \left\| G_{i+1}(z) \right\|_{\mathcal{D}}^2, \]
    \noindent{then this implies exponential decay. In Theorem \ref{cor: Radii Sequence}, we discuss parameters for obtaining this successive decay in norm. Although we restrict our study to the Dirichlet space, our results can likely be extended to more general subspaces of $\mathcal{H}^2(\mathbb{T})$ and we believe this to be an interesting avenue for future research.}
    
\subsection{Contraction Inequalities}
    \begin{theorem} \label{thm: ExponentialConvergence}
        \noindent{Let the polynomials $F(z)$ and $F_{\lambda}(z)$ be given by}
            \[F(z)=\prod_{i=1}^{n}(z-\alpha_i) \textrm{\hspace{15mm}} F_{\lambda}(z) = \prod_{i=1}^{n} \left( z - \frac{\alpha_i}{\lambda} \right)\]
        \noindent{and let} 
            \[ \lambda \geq 6.15 \left( \max_{1 \leq i \leq n} |\alpha_i| \right). \]
        \noindent{Then, the Blaschke factorization $F_{\lambda}(z)=B(z) \cdot G_{\lambda}(z)$}
        \noindent{satisfies}
            \[ \| G_{\lambda}(z) \|_{\mathcal{D}}^2 \leq \frac{1}{2} \| F_{\lambda}(z) \|_{\mathcal{D}}^2. \]
    \end{theorem}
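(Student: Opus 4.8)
The plan is to compute $G_\lambda$ explicitly, translate the assertion into an inequality between Dirichlet and Hardy norms of $G_\lambda$ alone, and then control $\| G_\lambda \|_{\mathcal{D}}^2$ by $\| G_\lambda \|_{\mathcal{H}^2}^2$ via the logarithmic derivative of $G_\lambda$.

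Since $\lambda \geq 6.15\left( \max_i |\alpha_i| \right) > \max_i |\alpha_i|$, every root $\alpha_i/\lambda$ of $F_\lambda$ lies in $\D$, so the Blaschke product captures all of them and
\[ B(z) = \prod_{i=1}^{n} \frac{z - \alpha_i/\lambda}{1 - (\overline{\alpha_i}/\lambda) z}, \qquad G_\lambda(z) = \prod_{i=1}^{n} \left( 1 - \frac{\overline{\alpha_i}}{\lambda} z \right). \]
Write $\beta_i = \alpha_i/\lambda$ and $\rho = \max_i |\beta_i| \leq 1/6.15$, and expand $F_\lambda(z) = \sum_{k=0}^{n} c_k z^k$ and $G_\lambda(z) = \sum_{k=0}^{n} d_k z^k$. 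Comparing the elementary-symmetric-function expansions of the two products gives the coefficient reversal $c_k = \overline{d_{n-k}}$ for all $k$; in particular $c_n = d_0 = 1$ and $|c_k| = |d_{n-k}|$. Substituting $m = n-k$,
\[ \| F_\lambda \|_{\mathcal{D}}^2 = \sum_{k=1}^{n} k |c_k|^2 = \sum_{m=0}^{n-1} (n-m) |d_m|^2 = n \| G_\lambda \|_{\mathcal{H}^2}^2 - \| G_\lambda \|_{\mathcal{D}}^2, \]
so the claimed inequality $\| G_\lambda \|_{\mathcal{D}}^2 \leq \tfrac12 \| F_\lambda \|_{\mathcal{D}}^2$ is equivalent to $3 \| G_\lambda \|_{\mathcal{D}}^2 \leq n \| G_\lambda \|_{\mathcal{H}^2}^2$.

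I would prove this last inequality from the identity $\| G_\lambda \|_{\mathcal{D}}^2 = \langle z G_\lambda', G_\lambda \rangle_{\mathcal{H}^2} = \tfrac{1}{2\pi} \int_0^{2\pi} |G_\lambda(e^{i\theta})|^2 \, \frac{e^{i\theta} G_\lambda'(e^{i\theta})}{G_\lambda(e^{i\theta})} \, d\theta$ together with the partial-fraction formula $\tfrac{z G_\lambda'(z)}{G_\lambda(z)} = \sum_{i=1}^{n} \tfrac{-\overline{\beta_i} z}{1 - \overline{\beta_i} z}$. On $|z| = 1$ one has $|1 - \overline{\beta_i} z| \geq 1 - |\beta_i| \geq 1 - \rho$ and $t \mapsto t/(1-t)$ is increasing, so $\bigl| \tfrac{z G_\lambda'(z)}{G_\lambda(z)} \bigr| \leq \tfrac{n\rho}{1-\rho}$ there, whence $\| G_\lambda \|_{\mathcal{D}}^2 \leq \tfrac{n\rho}{1-\rho} \| G_\lambda \|_{\mathcal{H}^2}^2$. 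Since $\rho \leq 1/6.15 < 1/4$ forces $\tfrac{3\rho}{1-\rho} \leq 1$, the reduced inequality $3 \| G_\lambda \|_{\mathcal{D}}^2 \leq n \| G_\lambda \|_{\mathcal{H}^2}^2$ follows, completing the argument.

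The main obstacle is getting the right comparison in the middle step. A naive bound $|d_k| \leq \binom{n}{k} \rho^k$ paired with the crude estimate $\| F_\lambda \|_{\mathcal{D}}^2 \geq n |c_n|^2 = n$ is useless, because $\sum_k k \binom{n}{k}^2 \rho^{2k}$ grows exponentially in $n$ for every fixed $\rho \in (0,1)$; one must instead bound $\| G_\lambda \|_{\mathcal{D}}^2$ against $\| G_\lambda \|_{\mathcal{H}^2}^2$ — the same coefficients, reweighted — which is exactly what the two norm identities above supply. It is also important to work on the circle with $z G_\lambda'/G_\lambda$, where the factor $z$ is unimodular, rather than estimating $\tfrac{1}{\pi}\iint_\D |G_\lambda'|^2\,dA$ pointwise, since the latter costs an extra factor of $n$. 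Finally, the constant $6.15$ is comfortably conservative: the estimate above already yields the conclusion for every $\lambda \geq 4\max_i |\alpha_i|$, so there is room to trade a cleaner constant for a slightly more delicate estimate if desired.
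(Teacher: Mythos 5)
Your proof is correct, and it takes a genuinely different route from the paper's. Both arguments rest on the coefficient reversal $c_k=\overline{d_{n-k}}$ (the paper's Lemma 5.2), but the paper then invokes the Coifman--Steinerberger formula $\|F_\lambda\|_{\mathcal D}^2-\|G_\lambda\|_{\mathcal D}^2=\int_0^{2\pi}|G_\lambda|^2\sum_i\frac{1-|\alpha_i/\lambda|^2}{|e^{it}-\alpha_i/\lambda|^2}\,dt$, lower-bounds the kernel by $(1-\delta)n$, and controls the resulting coefficient inequality with binomial bounds, a geometric-series tail, Stirling's formula, and an asymptotic optimization in $\delta$ --- that is where $6.15$ comes from. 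You instead use the reversal to get the exact identity $\|F_\lambda\|_{\mathcal D}^2=n\|G_\lambda\|_{\mathcal H^2}^2-\|G_\lambda\|_{\mathcal D}^2$ (the $m=n$ term drops out since its weight $n-m$ vanishes), so the claim becomes \emph{equivalent} to $3\|G_\lambda\|_{\mathcal D}^2\le n\|G_\lambda\|_{\mathcal H^2}^2$, and you then bound $\|G_\lambda\|_{\mathcal D}^2=\langle zG_\lambda',G_\lambda\rangle_{\mathcal H^2}$ by the pointwise estimate $|zG_\lambda'/G_\lambda|\le n\rho/(1-\rho)$ on the circle, with $\rho=\max_i|\alpha_i|/\lambda$. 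I checked each step (including roots at the origin, whose factors in $G_\lambda$ are constant, and the absence of zeros of $G_\lambda$ on the circle), and the condition $\rho\le 1/4$ indeed suffices, so your argument proves the theorem with the constant $4$ in place of $6.15$ and with far less machinery. It is also consistent with the paper's optimality remark: your reduction is lossless, and for $F=(z-m)^n$ the reduced inequality fails asymptotically exactly when $m>1/2$, matching the observed threshold $m_0\to 1/2$; the only slack you introduce is the worst-case bound $|1-\overline{\beta_i}z|\ge 1-|\beta_i|$. What the paper's heavier route buys is mainly that its $\delta$-parametrized binomial machinery is reused essentially verbatim for the power-mean hypothesis of Theorem 2.2, whereas your pointwise bound on the logarithmic derivative is tied to controlling $\max_i|\beta_i|$.
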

    
    \noindent{This Theorem demonstrates that exponential convergence occurs in the Dirichlet space if the roots are sufficiently scaled down. Alternatively, one can obtain the same inequality when the $n$-th power mean of the scaled roots is sufficiently small, as described by the following Theorem.}
    
   \begin{theorem}\label{prop:RootDistribution}  
        \noindent{Let $F(z)$ and $F_{\lambda}(z)$ be defined as above. If}
      \[ \lambda \geq 6.75\sqrt[n]{\frac{1}{n}\sum_{i=1}^n |\alpha_i|^n}\]
      \noindent{then, as above,}
        \[ \| G_{\lambda}(z) \|_{\mathcal{D}}^2 \leq \frac{1}{2} \| F_{\lambda}(z) \|_{\mathcal{D}}^2. \]
    \end{theorem}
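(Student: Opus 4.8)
The plan is to push everything onto the coefficients of $G_\lambda$ alone, after which the statement collapses to an elementary binomial inequality, and the threshold $m=\mu/\lambda\le 4/27$ (equivalently $\lambda\ge \tfrac{27}{4}\mu=6.75\,\mu$) is exactly the point at which that inequality still holds. Write $\beta_i=\alpha_i/\lambda$ and $\mu=\bigl(\tfrac1n\sum_i|\alpha_i|^n\bigr)^{1/n}$. Since $\mu\ge n^{-1/n}\max_i|\alpha_i|$ and $\lambda\ge 6.75\,\mu$ we have $\lambda>\max_i|\alpha_i|$, so every $\beta_i$ lies in $\D$, all roots are captured, and $G_\lambda(z)=\prod_{i=1}^n(1-\overline{\beta_i}z)$ while $F_\lambda(z)=\prod_{i=1}^n(z-\beta_i)$. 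The first step is the reflection identity $F_\lambda(z)=z^n\,\overline{G_\lambda(1/\bar z)}$ — the coefficient form of the fact that multiplication by the finite Blaschke product $B$ is an isometry of $\mathcal{H}^2$. Writing $F_\lambda=\sum_k c_k z^k$ and $G_\lambda=\sum_j g_j z^j$, this gives $|c_k|=|g_{n-k}|$ and $g_0=1$, hence
\[ \|F_\lambda\|_{\mathcal{D}}^2=\sum_{k=1}^n k|c_k|^2=\sum_{j=0}^{n-1}(n-j)|g_j|^2,\qquad \|G_\lambda\|_{\mathcal{D}}^2=\sum_{j=1}^n j|g_j|^2. \]
Substituting into $\|G_\lambda\|_{\mathcal{D}}^2\le\tfrac12\|F_\lambda\|_{\mathcal{D}}^2$ and clearing denominators, the target becomes $\sum_{j=0}^n(3j-n)|g_j|^2\le 0$, equivalently $\sum_{j=1}^n(3j-n)|g_j|^2\le n$.

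Next I would estimate the coefficients. With $b_i=|\beta_i|=|\alpha_i|/\lambda$ one has $|g_j|=|e_j(\beta_1,\dots,\beta_n)|\le e_j(b_1,\dots,b_n)$, where $e_j$ is the $j$-th elementary symmetric function; Maclaurin's inequality followed by the power-mean inequality yields $e_j(b)\le\binom nj\bigl(\tfrac1n\sum_i b_i\bigr)^j\le\binom nj m^j$ with $m:=\mu/\lambda\le 4/27$. In $\sum_{j=1}^n(3j-n)|g_j|^2\le n$ the terms with $j<n/3$ have nonpositive coefficient and may be dropped, and $3j-n\le 2n$ for $j\le n$; so it suffices to prove
\[ \sum_{j\ge\lceil n/3\rceil}\binom nj^2 m^{2j}\le\tfrac12. \]

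This is the inequality that calibrates the constant. For $j\ge\lceil n/3\rceil$ the ratio of consecutive summands is $\bigl(\tfrac{n-j}{j+1}\bigr)^2 m^2<4m^2<1$, so the sum is at most $\binom n{\lceil n/3\rceil}^2 m^{2\lceil n/3\rceil}\big/(1-4m^2)$, and it remains to bound the leading term by $\tfrac12(1-4m^2)$. Writing $p=\lceil n/3\rceil/n\ge 1/3$ and using Stirling in the form $\binom nk\le\sqrt{\tfrac{n}{2\pi k(n-k)}}\,\tfrac{n^n}{k^k(n-k)^{n-k}}\,e^{1/(12n)}$, the leading term is at most $\tfrac{e^{1/(6n)}}{2\pi np(1-p)}\bigl(m^p/(p^p(1-p)^{1-p})\bigr)^{2n}$. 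Since $q\mapsto q(1-q)^{(1-q)/q}$ is increasing and equals $4/27$ at $q=1/3$, the hypothesis $m\le 4/27$ forces the base here to be $\le 1$, leaving a bound of order $1/n$; beyond a small explicit threshold on $n$ this is below $\tfrac12(1-4m^2)$, and the finitely many remaining $n$ (where $p$ may exceed $1/3$) are checked directly.

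The one genuinely delicate point is this last step: it requires a version of Stirling sharp enough to retain the $1/\sqrt n$ gain — the crude entropy bound $\binom nk\le 2^{nH(k/n)}$ does not suffice, as it discards precisely the polynomial factor that rescues the estimate — together with the bookkeeping around $\lceil n/3\rceil$ versus $n/3$ and the small-$n$ base cases. Everything above it, namely the reflection identity, the reduction to $\sum_j(3j-n)|g_j|^2\le 0$, and the Maclaurin/power-mean bound on the $e_j$, is routine.
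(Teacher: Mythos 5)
Your proposal is correct and follows essentially the same route as the paper: reflect the coefficients ($|c_k|=|g_{n-k}|$, $g_0=1$), reduce the Dirichlet inequality to a weighted coefficient sum split at the $n/3$--$2n/3$ index, bound $|g_j|\le\binom{n}{j}(\mu/\lambda)^j$ by symmetric-mean inequalities (you via Maclaurin plus the power mean, the paper via Muirhead plus the power mean), and control the resulting binomial term with Stirling. The only real difference is bookkeeping in the tail: you dominate $\sum_{j\ge\lceil n/3\rceil}\binom{n}{j}^2 m^{2j}$ by a geometric series anchored at $j=\lceil n/3\rceil$, whereas the paper invokes its Maximum Coefficient lemma and multiplies the largest term by the number of terms — both calibrate the same threshold $\mu/\lambda\le 4/27$, i.e.\ the constant $6.75$, and your remaining finite-$n$ checks are routine (for $n\ge 2$ one has $p\in[1/3,1/2]$, so the Stirling prefactor is already small enough, and $n=1$ is trivial).
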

    
    \noindent{Note that, if $n \rightarrow \infty$, then we recover Theorem \ref{thm: ExponentialConvergence} with a worse constant. The first result shows that it suffices to scale the roots to lie in some small disk, while the second result provides some flexibility with the distribution of the roots. Essentially, Theorems 2.1 and 2.2 quantify the intuition that the closer a function is to $z^n$, the faster the unwinding series converges. One might ask whether we can significantly decrease the constant 6.15. To that end, we examined the function $F(z) = (z-m)^n$ and determined the value $m=m_0$ for which the inequality}
	   \[ \| G(z) \|_{\mathcal{D}}^2 \leq \frac{1}{2} \| F(z) \|_{\mathcal{D}}^2 \]
	\noindent{is an equality. We numerically computed $m_0$ for large $n$ since the inequality breaks for $m > m_0$. While we have been unable to prove so, numerical experiments suggest that $m_0$ approaches $1/2$ as $n$ approaches infinity. This would imply that the constant 6.15 cannot be replaced with 2.}
	
\subsection{Exponential Convergence}

    \begin{proposition}[$\lambda$-Blaschke] \label{thm: Vlad}
        \noindent{Let $F$ be a monic polynomial. Then, for some sufficiently large $\lambda>0$, the Blaschke factorization $F(z) = B_{\lambda}(z) \cdot G(z)$ satisfies:}
            \[ \left\| \frac{1}{\lambda^n} \cdot G(z) \right\|_{\mathcal{D}}^2 \leq \frac{1}{2} \left\| F(z) \right\|_{\mathcal{D}}^2. \]
    \end{proposition}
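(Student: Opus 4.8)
The plan is to deduce this directly from Theorem~\ref{thm: ExponentialConvergence} via the scaling equivalence of Proposition~\ref{Blaschke-Blaschke Equivalency}, the only extra ingredient being the elementary fact that the dilation $z\mapsto z/\lambda$ is a contraction of the Dirichlet norm whenever $\lambda\ge 1$.

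First I would fix $F(z)=\prod_{i=1}^n(z-\alpha_i)$ and $F_\lambda(z)=\prod_{i=1}^n(z-\alpha_i/\lambda)$, and take any $\lambda\ge\lambda_0:=\max\{\,1,\ 6.15\max_i|\alpha_i|\,\}$. The second assertion of Proposition~\ref{Blaschke-Blaschke Equivalency} relates the canonical factorization $F_\lambda=B_1 G_\lambda$ and the $\lambda$-Blaschke factorization $F=B_\lambda G$ by $\lambda^n G_\lambda(z)=G(\lambda z)$, that is, $\tfrac{1}{\lambda^n}G(z)=G_\lambda(z/\lambda)$. So the quantity to be bounded is exactly $\|G_\lambda(z/\lambda)\|_{\mathcal D}^2$.

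Next I would record two monotonicity facts. If $g(z)=\sum_k c_k z^k$ and $\lambda\ge 1$, then $g(z/\lambda)=\sum_k c_k\lambda^{-k}z^k$, so $\|g(z/\lambda)\|_{\mathcal D}^2=\sum_{k\ge1}k|c_k|^2\lambda^{-2k}\le\|g\|_{\mathcal D}^2$ term by term; applied to $g=G_\lambda$ this gives $\|G_\lambda(z/\lambda)\|_{\mathcal D}^2\le\|G_\lambda\|_{\mathcal D}^2$. Similarly $F_\lambda(z)=\lambda^{-n}F(\lambda z)$, so the $j$-th Taylor coefficient of $F_\lambda$ is that of $F$ scaled by $\lambda^{j-n}$ with $j\le n$, whence $\|F_\lambda\|_{\mathcal D}^2\le\|F\|_{\mathcal D}^2$. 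Finally, the hypothesis $\lambda\ge 6.15\max_i|\alpha_i|$ lets me apply Theorem~\ref{thm: ExponentialConvergence} to $F_\lambda$, yielding $\|G_\lambda\|_{\mathcal D}^2\le\tfrac12\|F_\lambda\|_{\mathcal D}^2$. Chaining, $\bigl\|\tfrac{1}{\lambda^n}G\bigr\|_{\mathcal D}^2=\|G_\lambda(z/\lambda)\|_{\mathcal D}^2\le\|G_\lambda\|_{\mathcal D}^2\le\tfrac12\|F_\lambda\|_{\mathcal D}^2\le\tfrac12\|F\|_{\mathcal D}^2$, and any $\lambda\ge\lambda_0$ serves as ``sufficiently large.''

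I do not expect a genuine obstacle: the analytic content is already carried by Proposition~\ref{Blaschke-Blaschke Equivalency} and Theorem~\ref{thm: ExponentialConvergence}, and what remains is bookkeeping. The two points meriting care are (i) checking that the $\lambda^n$ normalizations line up so that $\tfrac{1}{\lambda^n}G$ is precisely $G_\lambda$ pre-composed with $z\mapsto z/\lambda$, and (ii) confirming that all factorizations in play are of the polynomial itself --- the convention used throughout Section~\ref{sec: Statement of Results} --- rather than of $F-F(0)$. The degenerate case $\deg F=0$, where every term vanishes, deserves a one-line mention.
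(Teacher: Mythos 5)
Your proposal is correct, and it reaches the conclusion by a genuinely different route than the paper. The paper's own proof compares $F$ with the doubly-scaled polynomial $F_{\lambda^2}$: it uses the first identity of Proposition \ref{Blaschke-Blaschke Equivalency} (the $\sqrt{\lambda}$ version) to identify $\lambda^{-n}G$ with the canonical cofactor $G_{\lambda^2}$, argues by continuity that $\|F_{\lambda^2}\|_{\mathcal D}^2\to\|z^n\|_{\mathcal D}^2=n$ as $\lambda\to\infty$ (so that $\|G_{\lambda^2}\|_{\mathcal D}^2\le n/2$ for large $\lambda$), and then closes with a separate lemma that every monic polynomial of degree $n$ has $\|F\|_{\mathcal D}^2\ge n$. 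You instead use the second identity, $\lambda^{-n}G(z)=G_\lambda(z/\lambda)$, and replace both the limiting argument and the monic-norm lemma by two elementary coefficient-wise observations: dilation $z\mapsto z/\lambda$ with $\lambda\ge 1$ contracts the Dirichlet norm (applied to $G_\lambda$), and $\|F_\lambda\|_{\mathcal D}^2\le\|F\|_{\mathcal D}^2$ since the $j$-th coefficient of $F_\lambda$ is that of $F$ times $\lambda^{j-n}$; Theorem \ref{thm: ExponentialConvergence} then supplies the middle inequality. What your route buys is an explicit admissible threshold, $\lambda\ge\max\{1,\,6.15\max_i|\alpha_i|\}$, which is sharper than the qualitative ``sufficiently large'' of the statement and of the paper's limiting argument; what the paper's route buys is the conceptual picture that $F_{\lambda^2}$ tends to $z^n$, whose cofactor is trivial. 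One small point of care, which you flag correctly in (i): the identity $\lambda^n G_\lambda(z)=G(\lambda z)$ should be read with $G=F/B_\lambda$ and $G_\lambda=F_\lambda/B_1$ (the paper's displayed product formula for $G$ silently drops the constant factor contributed by roots at the origin); with that reading your chain, including the case $F=z^n$ and the trivial case $\deg F=0$, goes through verbatim.
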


    \noindent{One might think it strange that a factor of $\lambda^{-n}$ appears in the inequality. However, whereas $|B(z)|=1$ on the boundary of the unit disk, $B_{\lambda}$ is of size $\lambda^{-n}$ (for large $\lambda$). Thus, in the actual term of the unwinding series, $B_{\lambda}$ serves to rescale back the $G(z)$ term, which is why allowing a factor of $\lambda^{-n}$ here is justified. This means that we can pick some sequence of $\lambda$ such that the contraction inequality holds at every step. In turn, we can pick some sequence of radii corresponding to the sequence of $\lambda$ with which to perform $r$-Blaschke.}
    
    \begin{theorem}[Exponential Convergence] \label{cor: Radii Sequence}
        For every polynomial $F$, there is a sequence of radii $r_1,\ldots,r_n$ that guarantees the above exponential convergence at each iteration of $r$-Blaschke factorization.
    \end{theorem}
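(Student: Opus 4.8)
\noindent The plan is to construct the radii greedily, invoking Proposition~\ref{thm: Vlad} once per iteration and exploiting a degree drop that forces termination. Write $\deg F = n$. The structural fact that makes everything work is that the polynomial factored at the $i$-th step, namely $G_{i-1}(z) - G_{i-1}(0)$ (with the convention $G_{-1}:=F$), always vanishes at the origin, and $0$ lies in every disk $\D_r$; by the definition of the $r$-Blaschke $G$-factor, this zero at $0$ is absorbed into $B_r$ but is \emph{not} reflected back into $G_i$. Hence $\deg G_i \le \deg G_{i-1}-1$, the degrees strictly decrease from $n$, and after at most $n$ steps $G$ becomes constant. In particular only finitely many radii $r_0,\dots,r_{n-1}$ are ever needed, so the variable unwinding series is actually a finite sum and ``convergence'' is exactness; the content of the theorem is the \emph{rate}.

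\noindent To pick $r_i$, I would first normalize. Write $G_{i-1}(z)-G_{i-1}(0) = c_i\,\widetilde F_i(z)$, where $\widetilde F_i$ is monic of degree $d_i:=\deg G_{i-1}$ and $c_i\in\C$ is the leading coefficient of $G_{i-1}$. Applying Proposition~\ref{thm: Vlad} to $\widetilde F_i$ supplies a sufficiently large $\lambda_i>0$ with Blaschke factorization $\widetilde F_i = B_{\lambda_i}\cdot\widehat G_i$ and $\|\lambda_i^{-d_i}\widehat G_i\|_{\mathcal D}^2 \le \tfrac12\|\widetilde F_i\|_{\mathcal D}^2$. Take $r_i:=\lambda_i$. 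Since $B_r$ is determined by the zeros alone, the $r$-Blaschke factorization of $G_{i-1}-G_{i-1}(0)$ is $B_{r_i}\cdot(c_i\widehat G_i)$, so $G_i=c_i\widehat G_i$; and because the Dirichlet norm is homogeneous and insensitive to additive constants,
\[
\bigl\|r_i^{-d_i}G_i\bigr\|_{\mathcal D}^2 = |c_i|^2\bigl\|\lambda_i^{-d_i}\widehat G_i\bigr\|_{\mathcal D}^2 \le \tfrac12|c_i|^2\bigl\|\widetilde F_i\bigr\|_{\mathcal D}^2 = \tfrac12\bigl\|G_{i-1}(z)-G_{i-1}(0)\bigr\|_{\mathcal D}^2 = \tfrac12\bigl\|G_{i-1}\bigr\|_{\mathcal D}^2 .
\]
This is exactly the per-iteration contraction of Proposition~\ref{thm: Vlad}, now holding at every step. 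Setting $N_i := \bigl(\prod_{j=0}^{i}r_j^{-d_j}\bigr)\|G_i\|_{\mathcal D}$ with $N_{-1}:=\|F\|_{\mathcal D}$, the inequality reads $N_i^2\le\tfrac12 N_{i-1}^2$, whence $N_i\le 2^{-(i+1)/2}\|F\|_{\mathcal D}$ --- exponential decay of the (normalized) tails, which is precisely the phenomenon flagged in the Dirichlet-space discussion. Finally, since the factor $B_\lambda$ in Proposition~\ref{thm: Vlad} is by construction the $r$-Blaschke factor at radius $r=\lambda$ (cf. the scaling symmetry of Proposition~\ref{Blaschke-Blaschke Equivalency}), the numbers $\lambda_0,\dots,\lambda_{n-1}$ are literally the radii we want.

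\noindent The main obstacle is not a single hard estimate but keeping the bookkeeping airtight: one must check that the scalar normalization $c_i$ genuinely leaves the $r$-Blaschke factor $B_{r_i}$ untouched (it does, as $B_r$ sees only zeros), that the phrase ``sufficiently large $\lambda_i$'' can be met \emph{sequentially} (each threshold depends on $\widetilde F_i$, hence on the earlier $\lambda_j$, but there is no circularity), and that the captured zero at the origin really does lower the degree so the scheme halts. If one wants exponential decay of the genuine series remainder $R_{m+1}=B_{r_0}\cdots B_{r_m}G_m$ rather than just the per-step inequality, the extra work is to combine the bound on $N_m$ with $|B_{r_j}(e^{it})|\asymp r_j^{-d_j}$ on $\mathbb{T}$ (uniform once $r_j$ is large, with an $F$-dependent constant) and, to reach the Dirichlet norm rather than $\mathcal{H}^2$, to control $(B_{r_0}\cdots B_{r_m})'$; its only singularities inside $\D$ are the captured zeros, each again contributing a factor $O(r_j^{-d_j})$ up to a constant depending only on $F$. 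That derivative estimate is the one place where genuinely new computation, rather than bookkeeping, would be required.
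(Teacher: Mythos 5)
Your proposal is correct and follows essentially the same route as the paper, which establishes this theorem simply by iterating Proposition~\ref{thm: Vlad}: at each step one chooses a sufficiently large $\lambda_i$ so that the contraction inequality holds and takes $r_i=\lambda_i$ (with the root-capture condition and the constant from Theorem~\ref{thm: ExponentialConvergence}, via Ostrowski's bound, as explicit guidance for how large $r_i$ must be). Your monic normalization, degree-drop bookkeeping, and the remark on passing from the per-step normalized contraction to decay of the actual series remainder merely make explicit what the paper leaves implicit.
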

    
    \begin{figure}[h] \label{img: Root Translation}
        \centering
        \centering
        \begin{tikzpicture}[scale=1.3] 
        
            \node[right] at (0.75,0) (a) {$r_1$};
            \node[right] at (1.65,0) (a) {$r_2$};
            \node[right] at (2.5,0) (a) {$r_3$};
        
            \draw [thick] (0,0) circle (0.75cm);
            \draw [thick] (0,0) circle (1.65cm);
            \draw [thick] (0,0) circle (2.5cm);
            
            \draw [black] (0,0) circle (0.04cm);
            
            \draw [blue] (0.1, -0.1) circle (0.04cm);
            \draw [->, dashed] (0.11, -0.14) -- (0.80, -1.1);
            \filldraw [blue] (0.80, -1.1) circle (0.04cm);
            \draw[->, dashed] (0.80, -1.1) -- (-0.85,-0.64)  node[midway,left,below=0.0cm,rotate=-17] {$+G_0(0)$};
            
            \draw [blue] (-0.90,-0.65) circle (0.04cm);
            \draw [->, dashed] (-0.90,-0.65) -- (-1.8, -1.4);
            \filldraw [blue]  (-1.85, -1.45) circle (0.04cm);
            \draw [->, dashed] (-1.85, -1.45) -- (0.46, -1.90) node[midway,left,below=0.0cm,rotate=-10] {$+G_1(0)$};
            
            \draw[blue] (0.46, -1.90) circle (0.04cm);
            \draw [->, dashed] (0.46, -1.90) -- (0.76,-2.90);
            \filldraw[blue] (0.78,-2.94) circle (0.04cm);
            
            \draw [red] (0.4,0.35) circle (0.04cm);
            \draw [->, dashed] (0.4,0.35)--(0.7,0.65);
            \filldraw [red] (0.74,0.69) circle (0.04cm);
            \draw [->, dashed] (0.74,0.69) -- (-0.6,1.15) node[midway,left,above=0.0cm,rotate=-20] {$+G_0(0)$};
            
            \draw [red] (-0.66,1.17) circle (0.04cm);
            \draw [->,dashed] (-0.66,1.17) -- (-0.96,1.88);
            \filldraw[red] (-0.99,1.92) circle (0.04cm);
            \draw [->, dashed] (-0.99,1.92) -- (1.25, 1.54) node[midway,left,above=0.0cm,rotate=-10] {$+G_1(0)$};
            
            \draw [red] (1.3, 1.55) circle (0.04cm);
            \draw [->, dashed] (1.3, 1.55) -- (1.95, 2.30);
            \filldraw [red] (1.99, 2.34) circle (0.04cm);
        \end{tikzpicture}
    \caption{After a root is captured, it is translated by $G_i(0)$ and a new root is appended to the origin. We can pick a sequence of $r_i$ to optimize this capture-translation process.}
    \end{figure}
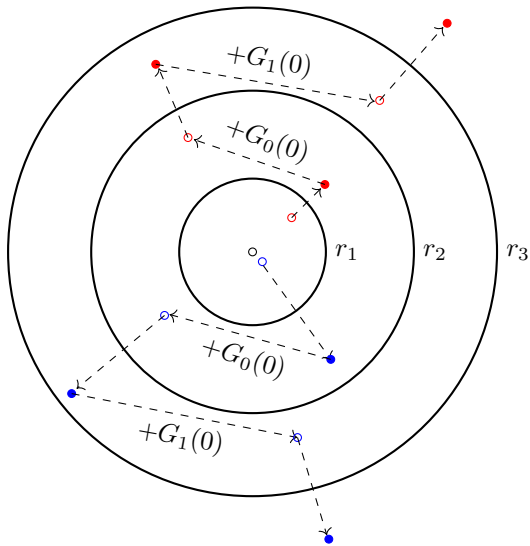
    
    \noindent{Explicitly, our minimal condition is to select a radius $r_i$ such that}
        \[ r_i > \max \{ |\alpha_{i}| : G_i(\alpha_i) - G_i(0) = 0 \}. \]
    \noindent{Equivalently, our radius $r_{i+1}$ should at least be large enough to capture all of the roots generated from the $i$th iteration of Blaschke. Additionally, $r_i$ may need to be increased to fit the parameters for exponential convergence described in Theorem \ref{thm: ExponentialConvergence}. Thus, the exact radii selected depend on root translation, since the roots of $B_{r_{i+1}}$ will be the roots of $G_{i}(z) - G_{i}(0)$ translated by $G_{i+1}(0)$. Equivalently, after inverting a collection $\alpha_i$ over a disk boundary, the newfound roots will all be affected by the translation by $G_{i+1}(0)$, as shown above in Figure 4. \\}
        
    \noindent{However, we lack the ability to predict where the roots of $G_{i+1}(z) - G_{i+1}(0)$ will be located. This is a trade-off when working with nonlinear functions and poses difficulty for general proofs concerning Blaschke behavior. In some cases, we are a priori able to bound the roots of $G_{i+1}(z).$ Indeed, according to a result by Ostrowski in \cite{Book}, given roots $\alpha_i$ of $G_{i}(z)$, we can order roots $\beta_i$ of $G_{i}(z)-G_{i}(0)$ such that}
        \[ |\alpha_i-\beta_i| < 2n \left( \prod_i^n |\alpha_i| \right)^{1/n}. \]
    \noindent{If this bound guarantees that any nonzero $\beta_i$ has norm at least $\varepsilon_i$ for some $\varepsilon_i > 0$, then we are able to bound the norms of the roots of $G_{i+1}(z)$ above by $r_i^2/\varepsilon_i$. According to the results of Theorem 2.1, we can thus pick}
        \[ r_{i+1} \ge 6.15 \left( \frac{r_i^2}{\varepsilon_i} \right). \]
        
\section{Computational Experiments}

    \subsection{Motivation.}

    \noindent{Computational examples suggested that the quality of unwinding series approximation under $r$-Blaschke improves as $r$ increases. \\}
    
    \begin{figure}[h]
    \begin{center} \label{Variable-Blaschke Example}
        \begin{minipage}{.30\textwidth}
            \includegraphics[width=4.0cm]{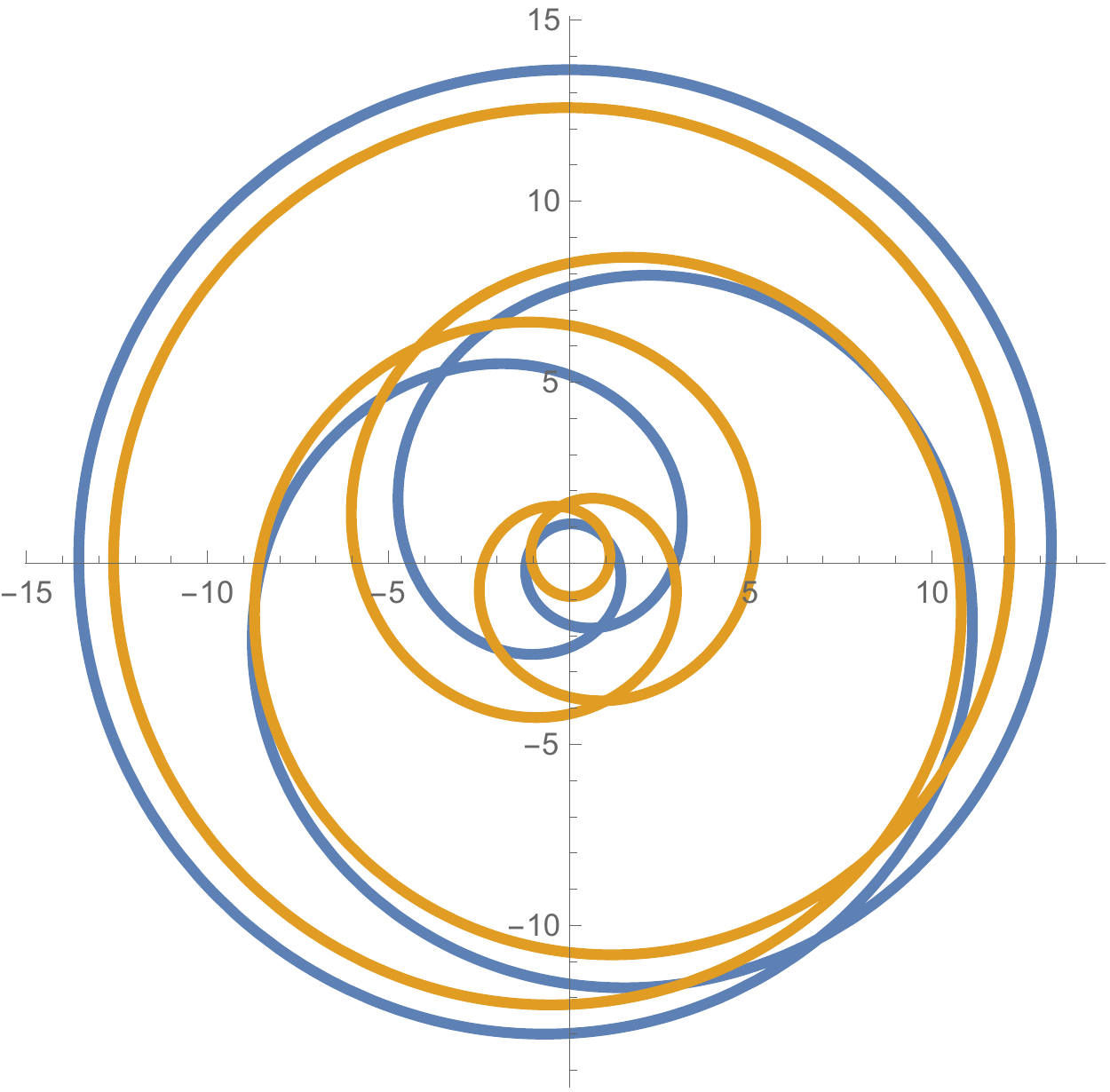}
            \begin{center} $F(0) + a_0 (B_1)_0 + a_1 (B_1)_0 (B_1)_1$ \end{center}
        \end{minipage}
        \begin{minipage}{.30\textwidth}
            \includegraphics[width=4.0cm]{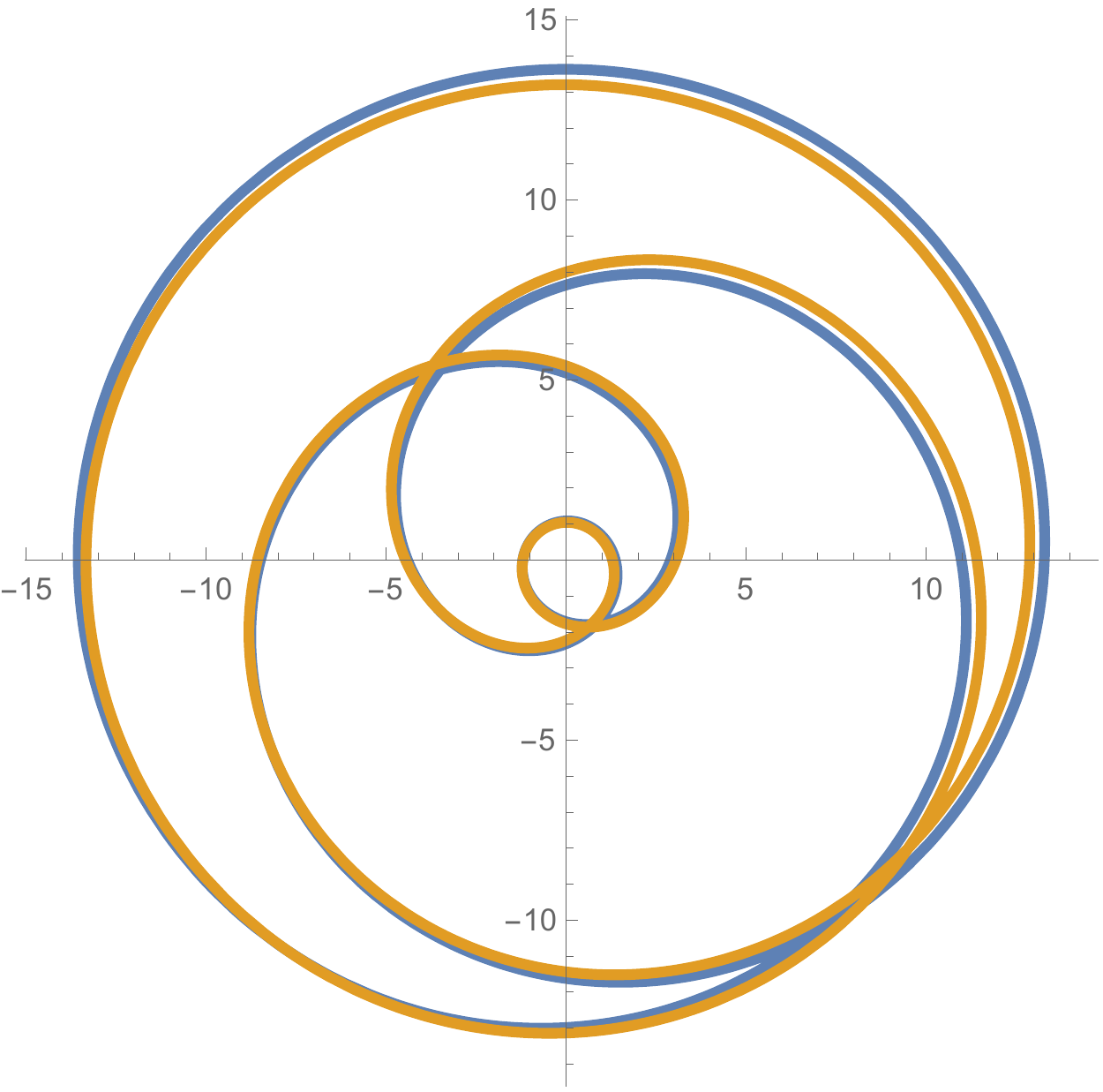}
            \begin{center} $F(0) +a_0 (B_2)_0 + a_1 (B_2)_0 (B_2)_1$ \end{center}
        \end{minipage}
        \begin{minipage}{.30\textwidth}
            \includegraphics[width=4.0cm]{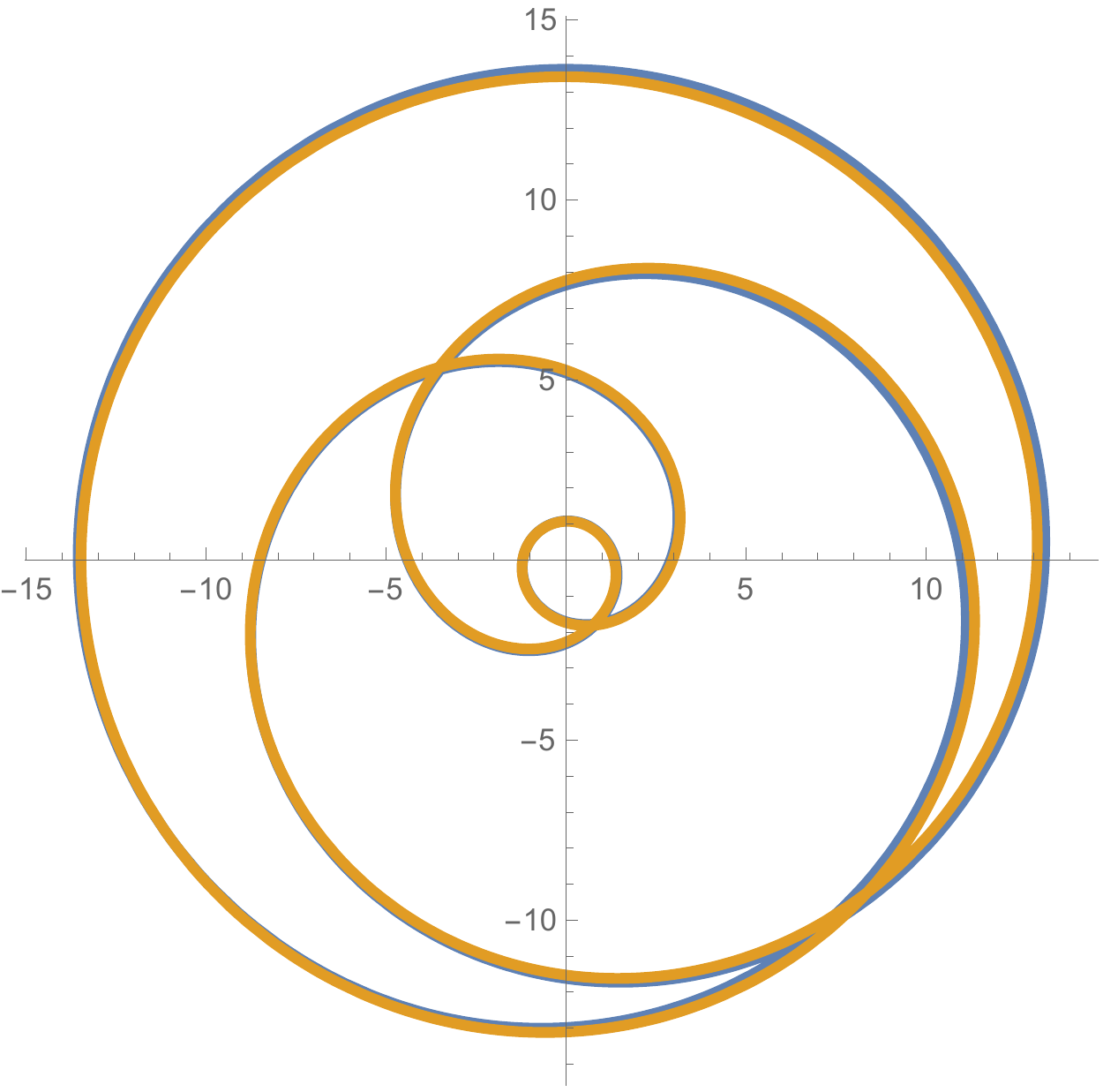}
            \begin{center}$F(0) +a_0 (B_3)_0 + a_1 (B_3)_0 (B_3)_1$ \end{center}
        \end{minipage}
    \end{center}
    \caption{$B_i$ can denote both the $i$th Blaschke product in the unwinding series and the Variable-Blaschke product with radius equal to $i$. When it is absolutely necessary to convey both pieces of information (as in this case), we use the notation $(B_r)_i$ to refer to the $i$-th $r$-Blaschke product in the unwinding series.}
    \end{figure}
    
    \noindent{Indeed, for every polynomial we have tested, faster convergence occurs for larger radii $r$. For example, consider the function $F(z) = (z - 0.5 i) (z - 0.2 - 0.6 i) (z + 0.1 + 0.3 i) (z - 0.3 i) (z - 6)$. In blue, we plotted the parametrization of $F$ given by $(\textrm{Re}[F(e^{it})],\textrm{Im}[F(e^{it}])$. In orange, we plotted the second degree unwinding series approximation using the Blaschke products $B$, $B_2$, and $B_3$.}
    
    \subsection{Method.}

    \noindent{In seeking quantitative evidence for exponential convergence, we adopted the following method:}
        \begin{enumerate}
            \item Construct a polynomial from $n$ random roots $\alpha_i$ from a uniform distribution in a disk of radius $m$:
                \[ F(z) = \prod_{i=1}^{n} (z-\alpha_i) \textrm{\hspace{10mm}} \alpha_i \in \mathcal{U}(\D_m). \]
            \item Fix some radius $r_1$ with which to perform $r$-Blaschke factorization.
            \item Calculate the error for various successive approximations of $F(z)$ via
                \begin{align*}
                    \textrm{Error} &:= \left\| F(z) - F(0) - \sum_{i=0}^{L} a_i \prod_{i=0}^{L} B_{r_i} \right\|_{L^2(\partial \D)} \\
                    &= \int_{0}^{2\pi} \left| F ( e^{it} ) - F(0) - \sum_{i=0}^{L} a_i \prod_{i=0}^{L} B_{r_i} \right|^2 dt.
                \end{align*}
                \noindent{where the unwinding series contains $L+1$ terms.}
            \item Repeat this error calculation for other radii $r_2,r_3,\ldots,r_k$.
            \item Repeat the entire process for other random polynomials of degree $n$ and average error results.
        \end{enumerate}
        
    \subsection{Discussion of Results.}
        
    \noindent{We computed error calculations across a variety of different circumstances. For example, consider $r$-Blaschke on a 15th degree polynomial whose roots are distributed uniformly in $\D$, $\D_5$, and $\D_{25}$, using the $r$-Blaschke products $B_1$, $B_2$, $B_3$, $B_4$, and $B_5$, plotted on logarithmic graphs:}
    
    \begin{figure}[h]
        \centering
        \begin{tikzpicture}[y=.07cm, x=0.5cm, scale = 1.0]
         	
         	\foreach \y in {-70,-30,-20,-10,0,10}
         	    \draw[line width=0.0cm,gray] (0,\y)--(15,\y);
         	\foreach \y in {-60,-50,-40}
         	    \draw[line width=0.0cm,gray] (0,\y)--(1,\y);
         	\foreach \y in {-60,-50,-40}
         	    \draw[line width=0.0cm,gray] (4,\y)--(15,\y);
         	\foreach \x in {2,3}
         	    \draw[line width=0.0cm,gray] (\x,-30)--(\x,10);
         	\foreach \x in {1,4,5,6,7,8,9,10,11,12,13,14,15}
         	    \draw[line width=0.0cm,gray] (\x,-70)--(\x,10);
         	
        	\draw[->] (0,0) -- coordinate (x axis mid) (15.5,0);
            	\draw[<->] (0,-73) -- coordinate (y axis mid) (0,13);
            	\foreach \x in {1,...,15}
             		\draw (\x,1pt) -- (\x,-3pt)
        			node[anchor=north] {};
            	\foreach \y in {-70,-60,-50,-40,-30,-20,-10,0,10}
             		\draw (1pt,\y) -- (-3pt,\y) 
             			node[anchor=east] {\y};
             
            \filldraw[thin,red] (1,5.49935) circle (0.05cm);
            \filldraw[thin,red] (2,5.37006) circle (0.05cm);
            \filldraw[thin,red] (3,	5.02404) circle (0.05cm);
            \filldraw[thin,red] (4,	4.41028) circle (0.05cm);
            \filldraw[thin,red] (5,	3.58982) circle (0.05cm);
            \filldraw[thin,red] (6,	2.80219) circle (0.05cm);
            \filldraw[thin,red] (7,	1.90963) circle (0.05cm);
            \filldraw[thin,red] (8,	0.276982) circle (0.05cm);
            \filldraw[thin,red] (9,	-2.24568) circle (0.05cm);
            \filldraw[thin,red] (10,-4.2031) circle (0.05cm);
            \filldraw[thin,red] (11,-6.18324) circle (0.05cm);
            \filldraw[thin,red] (12,-8.6395) circle (0.05cm);
            \filldraw[thin,red] (13,-12.3938) circle (0.05cm);
            \filldraw[thin,red] (14,-16.225) circle (0.05cm);
            \draw[thin,red,dashed] (1,5.49935)--(2,5.37006)--(3,	5.02404)--(4,	4.41028)--(5,	3.58982)--(6,	2.80219)--(7,	1.90963)--(8,	0.276982)--(9,	-2.24568)--(10,-4.2031)--(11,-6.18324)--(12,-8.6395)--(13,-12.3938)--(14,-16.225);
            
            \filldraw[thin,blue] (1,4.43602) circle (0.05cm);
            \filldraw[thin,blue] (2,2.42053) circle (0.05cm);
            \filldraw[thin,blue] (3,-0.332903) circle (0.05cm);
            \filldraw[thin,blue] (4,-3.61327) circle (0.05cm);
            \filldraw[thin,blue] (5,-7.21322) circle (0.05cm);
            \filldraw[thin,blue] (6,-10.601) circle (0.05cm);
            \filldraw[thin,blue] (7,-14.0277) circle (0.05cm);
            \filldraw[thin,blue] (8,-18.4868) circle (0.05cm);
            \filldraw[thin,blue] (9,-25.2332) circle (0.05cm);
            \filldraw[thin,blue] (10,-28.3522) circle (0.05cm);
            \filldraw[thin,blue] (11,-32.3611) circle (0.05cm);
            \filldraw[thin,blue] (12,-37.9509) circle (0.05cm);
            \filldraw[thin,blue] (13,-45.4189) circle (0.05cm);
            \filldraw[thin,blue] (14,-51.0898) circle (0.05cm);
            \draw[thin,blue,dashed] (1,4.43602)--(2,2.42053)--(3,-0.332903)--(4,-3.61327)--(5,-7.21322)--(6,-10.601)--(7,-14.0277)--(8,-18.4868)--(9,-25.2332)--(10,-28.3522)--(11,-32.3611)--(12,-37.9509)--(13,-45.4189)--(14,-51.0898);
            
            \foreach \y [count = \yi] in {3.2448,	-0.22594,	-4.53192,	-9.40004,	-14.6287,	-19.6381,	-24.6047,	-30.5986,	-39.1964,	-42.5318,	-48.0987,	-55.3639,	-63.0799,	-63.332}
                    \filldraw[thin,teal] (\yi,\y) circle (0.05cm);
            \draw[thin,teal,dashed] (1,3.2448)--(2,-0.22594)--(3,-4.53192)--(4,-9.40004)--(5,-14.6287)--(6,-19.6381)--(7,-24.6047)--(8,-30.5986)--(9,-39.1964)--(10,-42.5318)--(11,-48.0987)--(12,-55.3639)--(13,-63.0799)--(14,-63.332);
            
            \foreach \y [count = \yi] in {2.25683,	-2.30424,	-7.73837,	-13.7432,	-20.125,	-26.2886,	-32.3777,	-39.4916,	-49.2769,	-53.7702,	-60.4152,	-63.4561,	-63.4512,	-63.4511}
                \filldraw[thin,orange] (\yi,\y) circle (0.05cm);
            \draw[thin,orange,dashed] (1,2.25683)--(2,-2.30424)--(3,-7.73837)--(4,-13.7432)--(5,	-20.125)--(6,-26.2886)--(7,-32.3777)--(8,-39.4916)--(9,-49.2769)--(10,-53.7702)--(11,-60.4152)--(12,-63.4561)--(13,-63.4512)--(14,-63.4511);
            
            \foreach \y [count = \yi] in {1.44189,	-3.98436,	-10.3001,	-17.1916,	-24.4668,	-31.525,	-38.4938,	-46.4856,	-57.1742,	-62.2506,	-63.5486,	-63.5315,	-63.5317,	-63.5317}
                \filldraw[thin,purple] (\yi,\y) circle (0.05cm);
            \draw[thin,purple,dashed] (1,1.44189)--(2,-3.98436)--(3,	-10.3001)--(4,	-17.1916)--(5,	-24.4668)--(6,	-31.525)--(7,	-38.4938)--(8,	-46.4856)--(9,	-57.1742)--(10,	-62.2506)--(11,	-63.5486)--(12,	-63.5315)--(13,	-63.5317)--(14,	-63.5317);
            
            
            \begin{scope}[shift={(1.6,-62)}] 
        	\draw 
        		plot[mark=*, mark options={fill=purple}] (0.25,0)
        		node[right=0.1cm]{$B_5$};
        	\draw[yshift=\baselineskip]
        		plot[mark=*, mark options={fill=orange}] (0.25,0)
        		node[right=0.1cm]{$B_4$};
        	\draw[yshift=2\baselineskip]
        		plot[mark=*, mark options={fill=teal}] (0.25,0)
        		node[right=0.1cm]{$B_3$};
        	\draw[yshift=3\baselineskip]
        		plot[mark=*, mark options={fill=blue}] (0.25,0)
        		node[right=0.1cm]{$B_2$};
        	\draw[yshift=4\baselineskip]
        		plot[mark=*, mark options={fill=red}] (0.25,0)
        		node[right=0.1cm]{$B_1$};
        	\end{scope}
             
        	\node[] at (7.5,15) {(A): Roots in $\D$};
        	\node[] at (7.5,-75) {Iteration $(L)$};
        	\node[rotate=90, above=0.8cm] at (y axis mid) {$\log$(Error)};
            \node[] at (18,0) {};
        \end{tikzpicture}
    \end{figure}
    \begin{figure}[h]
        \begin{tikzpicture}[y=.07cm, x=0.5cm,scale = 1.0]
         	\foreach \y in {-40,10,20,30,40}
         	    \draw[line width=0.0cm,gray] (0,\y)--(15,\y);
         	\foreach \x in {1,4,5,6,7,8,9,10,11,12,13,14}
         	    \draw[line width=0.0cm,gray] (\x,-40)--(\x,40);
         	    
         	\draw[line width=0.0cm,gray] (2,10)--(2,40);
         	\draw[line width=0.0cm,gray] (3,10)--(3,40);
         	
         	\draw[line width=0.0cm,gray] (15,-40)--(15,40);
         	
     	    \draw[line width=0.0cm,gray] (0,-20)--(1,-20);
     	    \draw[line width=0.0cm,gray] (4,-20)--(15,-20);
     	    \draw[line width=0.0cm,gray] (0,-10)--(1,-10);
     	    \draw[line width=0.0cm,gray] (4,-10)--(15,-10);
     	    \draw[line width=0.0cm,gray] (0,-30)--(1,-30);
     	    \draw[line width=0.0cm,gray] (4,-30)--(15,-30);
     	    
     	    \draw[line width=0.0cm,gray] (2,0)--(2,10);
     	    \draw[line width=0.0cm,gray] (3,0)--(3,10);
         	        
        	\draw[->] (0,0) -- coordinate (x axis mid) (15.5,0);
            	\draw[<->] (0,-43) -- coordinate (y axis mid) (0,43);
        		\foreach \x in {1,...,15}
             		\draw (\x,1pt) -- (\x,-3pt)
        			node[anchor=north] {};
        		\draw(1,1pt)--(1,-3pt)node[anchor=north] {};
            	\foreach \y in {-40,-30,-20,-10,0,10,20,30,40}
             		\draw (1pt,\y) -- (-3pt,\y) 
             			node[anchor=east] {\y};
             
            \foreach \y [count = \yi] in {33.5791,	30.9497,	29.1704,	26.8096,	25.5418,	23.3236,	20.7583,	19.0954,	16.9413,	14.4748,	11.375,	8.0504,	4.59914,	0.0428439}
                \filldraw[thin,red] (\yi,\y) circle (0.05cm);
                \draw[thin,red,dashed] (1,33.5791)--(2,30.9497)--(3,	29.1704)--(4,	26.8096)--(5,	25.5418)--(6,	23.3236)--(7,	20.7583)--(8,19.0954)--(9,	16.9413)--(10,	14.4748)--(11,	11.375)--(12,8.0504)--(13,	4.59914)--(14,	0.0428439);
            
            \foreach \y [count = \yi] in {32.5848,29.2369,	27.1766,	24.882,	22.4489	,20.2919,	17.6786,	15.1891,	12.3271,	9.38457,	6.38709,	2.67671,-0.956078,	-6.03351}
                \filldraw[thin,blue] (\yi,\y) circle (0.05cm);
                \draw[thin,blue,dashed] (1,32.5848)--(2,29.2369)--(3,	27.1766)--(4,	24.882)--(5,	22.4489)--(6,	20.2919)--(7,	17.6786)--(8,	15.1891)--(9,	12.3271)--(10,	9.38457)--(11,	6.38709)--(12,	2.67671) --(13,-0.956078)--(14,	-6.03351);
            
            \foreach \y [count = \yi] in {30.5053,	27.4253,	24.461,	21.9915,	19.2673,	16.4481,	13.5243,	10.6886,	7.62,	4.26995,	0.517411,	-3.63826,	-8.39567,	-15.3551}
                    \filldraw[thin,teal] (\yi,\y) circle (0.05cm);
            \draw[thin,teal,dashed] (1,30.5053)--(2,27.4253)--(3,	24.461)--(4,	21.9915)--(5,	19.2673)--(6,	16.4481)--(7,	13.5243)--(8,	10.6886)--(9,	7.62)--(10,	4.26995)--(11,	0.517411)--(12,	-3.63826)--(13,	-8.39567)--(14,	-15.3551);
            
            \foreach \y [count = \yi] in {29.853,	26.09,	22.9764,	19.9546,	16.5845,	13.0325, 9.57852,	5.9132,	1.91871,	-2.38644,	-7.02806,	-11.0494,	-15.4667,	-22.498}
                \filldraw[thin,orange] (\yi,\y) circle (0.05cm);
            \draw[thin,orange,dashed] (1,29.853)--(2,26.09)--(3,	22.9764)--(4,	19.9546)--(5,16.5845)--(6,	13.0325)--(7, 9.57852)--(8,	5.9132)--(9,	1.91871)--(10,	-2.38644)--(11,	-7.02806)--(12,	-11.0494)--(13,	-15.4667)--(14,	-22.498);
            
            \foreach \y [count = \yi] in {29.3066,	26.0727,	22.5097,18.3948	,14.4314,	9.9758,	5.34516,	0.932817,	-3.87698,	-8.81503,	-13.7915,	-20.2271,	-24.3463,	-32.0367}
                \filldraw[thin,purple] (\yi,\y) circle (0.05cm);
            \draw[thin,purple,dashed] (1,29.3066)--(2,	26.0727)--(3,	22.5097)--(4,18.3948)--(5	,14.4314)--(6,	9.9758)--(7,	5.34516)--(8,	0.932817)--(9,	-3.87698)--(10,	-8.81503)--(11,	-13.7915)--(12,	-20.2271)--(13,	-24.3463)--(14,	-32.0367);
            
            
            \begin{scope}[shift={(1.6,-33)}] 
        	\draw 
        		plot[mark=*, mark options={fill=purple}] (0.25,0)
        		node[right=0.1cm]{$B_5$};
        	\draw[yshift=\baselineskip]
        		plot[mark=*, mark options={fill=orange}] (0.25,0)
        		node[right=0.1cm]{$B_4$};
        	\draw[yshift=2\baselineskip]
        		plot[mark=*, mark options={fill=teal}] (0.25,0)
        		node[right=0.1cm]{$B_3$};
        	\draw[yshift=3\baselineskip]
        		plot[mark=*, mark options={fill=blue}] (0.25,0)
        		node[right=0.1cm]{$B_2$};
        	\draw[yshift=4\baselineskip]
        		plot[mark=*, mark options={fill=red}] (0.25,0)
        		node[right=0.1cm]{$B_1$};
        	\end{scope}
             
        	\node[] at (7.5,45) {(B): Roots in $\D_5$};
        	\node[] at (7.5,-45) {Iteration $(L)$};
        	\node[rotate=90, above=0.8cm] at (y axis mid) {$\log$(Error)};
            \node[] at (18,0) {};
        \end{tikzpicture}
    \end{figure}
    \begin{figure}[h]
        \begin{tikzpicture}[y=.07cm, x=0.5cm, scale = 1.0]
         	
         	\foreach \y in {40,50,60,70,80}
         	    \draw[line width=0.0cm,gray] (0,\y)--(15,\y);
         	\foreach \y in {10,20,30}
         	    \draw[line width=0.0cm,gray] (0,\y)--(1,\y);
         	\foreach \x in {1,4,5,6,7,8,9,10,11,12,13,14,15}
         	    \draw[line width=0.0cm,gray] (\x,0)--(\x,80);
         	    
         	\draw[line width=0.0cm,gray] (2,40)--(2,80);
         	\draw[line width=0.0cm,gray] (3,40)--(3,80);
         	
         	\foreach \y in {10,20,30,40}
         	    \draw[line width=0.0cm,gray] (4,\y)--(15,\y);
         	
        	\draw[->] (0,0) -- coordinate (x axis mid) (15.3,0);
            	\draw[<->] (0,-3) -- coordinate (y axis mid) (0,83);
        		\foreach \x in {1,...,15}
             		\draw (\x,1pt) -- (\x,-3pt)
        			node[anchor=north] {};
            	\foreach \y in {0,10,20,30,40,50,60,70,80}
             		\draw (1pt,\y) -- (-3pt,\y) 
             			node[anchor=east] {\y};
             
            \foreach \y [count = \yi] in {70.4981,	67.3922,	63.5783	,59.2728,	55.4928	,51.3124,	46.5379,	41.3116	,35.5935,	29.4654	,23.3041,	16.8712,	9.76235	,2.35012}
                \filldraw[thin,red] (\yi,\y) circle (0.05cm);
                \draw[thin,red,dashed] (1,70.4981)--(2,	67.3922)--(3,	63.5783)--(4	,59.2728)--(5,	55.4928)--(6	,51.3124)--(7,	46.5379)--(8,	41.3116)--(9	,35.5935)--(10,	29.4654)--(11	,23.3041)--(12,	16.8712)--(13,	9.76235)--(14	,2.35012);
            
            \foreach \y [count = \yi] in {70.4949,	67.3773,	63.5459,	59.2048,	55.4376	,51.2739,	46.5114,	41.2814,	35.5401,	29.3718	,23.1732,	16.7335	,9.61428,	2.26984}
                \filldraw[thin,blue] (\yi,\y) circle (0.05cm);
                \draw[thin,blue,dashed] (1,70.4949)--(2,	67.3773)--(3,	63.5459)--(4,	59.2048)--(5,	55.4376)--(6	,51.2739)--(7,	46.5114)--(8,	41.2814)--(9,	35.5401)--(10,	29.3718)--(11	,23.1732)--(12,	16.7335)--(13	,9.61428)--(14,	2.26984);
            
            \foreach \y [count = \yi] in {70.2656,	67.2749,	63.368,	59.0239	,55.3156,	51.2066	,46.4599,	41.2169,	35.4444,	29.2095	,22.9604,	16.5047,	9.33015	,2.13341}
                    \filldraw[thin,teal] (\yi,\y) circle (0.05cm);
            \draw[thin,teal,dashed](1,70.2656)--(2,	67.2749)--(3,	63.368)--(4,	59.0239)--(5	,55.3156)--(6,	51.2066)--(7	,46.4599)--(8,	41.2169)--(9,	35.4444)--(10,	29.2095)--(11	,22.9604)--(12,	16.5047)--(13,	9.33015)--(14	,2.13341);
            
            \foreach \y [count = \yi] in {69.9117,	66.713,	62.8291,	58.4944,	54.4057,	50.1267,	45.3272,	40.0761,	34.2924,	28.0644,	21.8557,	15.4451,	8.32098,	1.95388}
                \filldraw[thin,orange] (\yi,\y) circle (0.05cm);
            \draw[thin,orange,dashed] (1,69.9117)--(2,	66.713)--(3,	62.8291)--(4,	58.4944)--(5,	54.4057)--(6,	50.1267)--(7,	45.3272)--(8,	40.0761)--(9,	34.2924)--(10,	28.0644)--(11,	21.8557)--(12,	15.4451)--(13,	8.32098)--(14,	1.95388);
            
            \foreach \y [count = \yi] in {69.6667,	66.247,	62.333,	58.1637,	53.894,	49.3562,	44.4224,	39.1245,	33.3135	,27.0985,	20.9502,	14.5873,	7.46279, 1.55584}
                \filldraw[thin,purple] (\yi,\y) circle (0.05cm);
            \draw[thin,purple,dashed] (1,69.6667)--(2,	66.247)--(3,	62.333)--(4,	58.1637)--(5,	53.894)--(6,	49.3562)--(7,	44.4224)--(8,	39.1245)--(9,	33.3135)--(10	,27.0985)--(11,	20.9502)--(12,	14.5873)--(13,	7.46279)--(14, 1.55584);
            
            
            \begin{scope}[shift={(1.6,8)}] 
        	\draw 
        		plot[mark=*, mark options={fill=purple}] (0.25,0)
        		node[right=0.1cm]{$B_5$};
        	\draw[yshift=\baselineskip]
        		plot[mark=*, mark options={fill=orange}] (0.25,0)
        		node[right=0.1cm]{$B_4$};
        	\draw[yshift=2\baselineskip]
        		plot[mark=*, mark options={fill=teal}] (0.25,0)
        		node[right=0.1cm]{$B_3$};
        	\draw[yshift=3\baselineskip]
        		plot[mark=*, mark options={fill=blue}] (0.25,0)
        		node[right=0.1cm]{$B_2$};
        	\draw[yshift=4\baselineskip]
        		plot[mark=*, mark options={fill=red}] (0.25,0)
        		node[right=0.1cm]{$B_1$};
        	\end{scope}
             
        	\node[] at (7.5,85) {(C): Roots in $\D_{25}$};
        	\node[] at (7.5,-8) {Iteration $(L)$};
        	\node[rotate=90, above=0.8cm] at (y axis mid) {$\log$(Error)};
        	\node[] at (18,0) {};
        \end{tikzpicture}
    \caption{Averaged $\log$(Error) from 100 samples of degree 15 polynomials with $\alpha_i \in \mathcal{U}(\D), \alpha_i \in \mathcal{U}(\D_{5}),$ and $\alpha_i \in \mathcal{U}(\D_{25})$, respectively. Note that the approximation becomes exact at $L=n$; thus, $\log$(Error) results for $L=n$ are omitted. Additionally, note that $\log$(Error) results for $B_4$ and $B_5$ in the top-left graph become too small for accurate numerical precision and thus flatten out.}
    \end{figure}
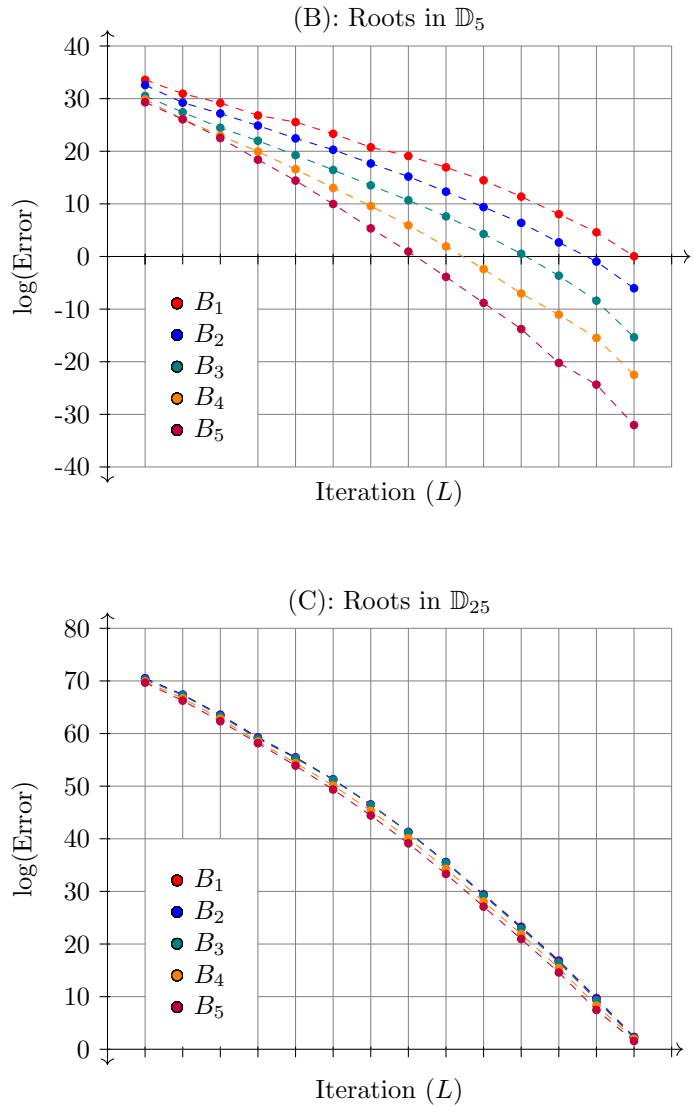

    \noindent{Firstly, we see that the error of approximation significantly increases (and potentially scales) as the distribution of roots is increased. Consider the $\log$(Error) of canonical Blaschke factorization on averaged 15th degree polynomials whilst modulating the distribution of the roots.} \pagebreak
    
    \begin{figure}[h]
        \centering
        \begin{tabular}{||c||c|c|c|c|c|c|c||}
            \hline \hline
            $L$ & 1 & 2 & 3 & 4 & 5 & 6 & 7 \\ 
            \hline \hline
            $\alpha_i \in \D$ & 5.49935&	5.37006&	5.02404&	4.41028&	3.58982	&2.80219&	1.90963 \\
            $\alpha_i \in \D_5$ & 33.5791&	30.9497&	29.1704&	26.8096	&25.5418&	23.3236&	20.7583 \\
            $\alpha_i \in \D_{25}$ & 70.4981&	67.3922&	63.5783&	59.2728&	55.4928	&51.3124&	46.5379 \\
          \hline \hline
        \end{tabular}
    \caption{Averaged $\log$(Error) from 100 samples of canonical Blaschke factorization on degree 15 polynomials with distributed roots $\alpha_i \in \mathcal{U}(\D), \alpha_i \in \mathcal{U}(\D_{5}),$ and $\alpha_i \in \mathcal{U}(\D_{25})$, respectively.}
    \end{figure}

    \noindent{We see that the error increases as the roots become more widely distributed. This trend holds across all successive iterations $L$ and for all radii $r$ with which we perform $r$-Blaschke factorization. Secondly, qualitatively, spacing differs between Variable-Blaschke plots depending on the distribution of the roots. This relationship between root distribution and Blaschke error is given by Proposition \ref{thm: ShrunkRoots}. Clearly, both increasing the number of successive iterations given some fixed $r$ and increasing the radius $r$ with which Variable-Blaschke is performed decrease error of approximation. However, we have not been able to develop any concrete determination of how quickly error decreases or the exact role that root distribution plays.}

\section{An Additional Result}

    \noindent{This section discusses an additional contraction result that carries over from \cite{CoifmanStefan} to the more general case.}

    \begin{theorem}[Generalized Operator Equivalence] \label{ScalingCorollary}
        \noindent{Suppose that $\max \{|\alpha_i|\} < \gamma< \lambda$ where $\gamma,\lambda \in \R$ and that $F_\gamma(z)$ and $F_\lambda(z)$ are defined as before. Then, $G_{\lambda}(z)$ and $G_{\gamma}(z)$, defined by the Variable-Blaschke factorizations}
            \[F_{\lambda}(z)=B_{\gamma}(z)\cdot G_{\lambda}(z)\]
        \noindent{and}
            \[F_{\gamma}(\gamma z/\lambda)=B_{\lambda}(z)\cdot G_{\gamma}(z)\]
        \noindent{satisfy}
            \[G_{\gamma}(z)=G_{\lambda}(z).\]
        \noindent{Alternatively, $G_{\lambda}(z)$ and $G_{\gamma}(z)$, defined by the Variable-Blaschke factorizations}
            \[F_{\gamma}(z)=B_{\lambda}(z)\cdot G_{\gamma}(z)\]
        \noindent{and}
            \[F_{\lambda}(z)=B_{\gamma}(z)\cdot G_{\lambda}(z)\]
        \noindent{satisfy}
            \[\gamma^n G_{\gamma}(\lambda z) = \lambda^n G_{\lambda}(\gamma z).\]
    \end{theorem}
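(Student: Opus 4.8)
The plan is to reduce both identities of Theorem~\ref{ScalingCorollary} to a single closed-form expression for the $G$-factor of a variable-Blaschke factorization, and then read off the two equalities by substitution. Everything will be algebraic; the only non-automatic point is checking that each of the factorizations appearing in the statement is in the regime where every root is captured, so I will organize the proof so that this is the single thing left to verify by hand.

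\textbf{Step 1: the key closed form.} If $P(z)=c\prod_{i=1}^{n}(z-\beta_i)$ with $c\in\C$ and $\max_i|\beta_i|<r$, then the $r$-Blaschke product depends only on the roots, so $B_r(z)=\prod_{i=1}^{n}\frac{(z-\beta_i)r}{r^{2}-\overline{\beta_i}z}$, and dividing telescopes to
\[ G(z)=\frac{P(z)}{B_r(z)}=\frac{c}{r^{n}}\prod_{i=1}^{n}\bigl(r^{2}-\overline{\beta_i}\,z\bigr). \]
This is the variable-radius version of the identity used in \cite{CoifmanStefan}, and it is the only structural fact the argument needs.

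\textbf{Step 2: the two identities.} For the first pair, apply Step~1 twice. To $F_{\lambda}=B_{\gamma}G_{\lambda}$ with $c=1$, $\beta_i=\alpha_i/\lambda$, $r=\gamma$, obtaining $G_{\lambda}(z)=(\gamma\lambda)^{-n}\prod_i(\gamma^{2}\lambda-\overline{\alpha_i}z)$. For $F_{\gamma}(\gamma z/\lambda)=B_{\lambda}G_{\gamma}$, first put the dilated polynomial in monic form, $F_{\gamma}(\gamma z/\lambda)=(\gamma/\lambda)^{n}\prod_i\!\bigl(z-\lambda\alpha_i/\gamma^{2}\bigr)$, then apply Step~1 with $c=(\gamma/\lambda)^{n}$, $\beta_i=\lambda\alpha_i/\gamma^{2}$, $r=\lambda$; after collecting the scalar prefactors this again yields $G_{\gamma}(z)=(\gamma\lambda)^{-n}\prod_i(\gamma^{2}\lambda-\overline{\alpha_i}z)$, so $G_{\gamma}=G_{\lambda}$. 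For the second pair I run the same computation: $F_{\gamma}=B_{\lambda}G_{\gamma}$ gives $G_{\gamma}(z)=(\gamma\lambda)^{-n}\prod_i(\lambda^{2}\gamma-\overline{\alpha_i}z)$ and $F_{\lambda}=B_{\gamma}G_{\lambda}$ gives $G_{\lambda}(z)=(\gamma\lambda)^{-n}\prod_i(\gamma^{2}\lambda-\overline{\alpha_i}z)$; substituting $z\mapsto\lambda z$ in the first, $z\mapsto\gamma z$ in the second, and multiplying by $\gamma^{n}$ and $\lambda^{n}$ respectively, one power of $\lambda$ (resp.\ $\gamma$) per root cancels the prefactor and both sides reduce to $\prod_{i=1}^{n}(\gamma\lambda-\overline{\alpha_i}z)$, which is exactly $\gamma^{n}G_{\gamma}(\lambda z)=\lambda^{n}G_{\lambda}(\gamma z)$.

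\textbf{Main obstacle.} The only step that is not bookkeeping is justifying the hypothesis $\max_i|\beta_i|<r$ of Step~1 for all four factorizations — the root sets $\{\alpha_i/\lambda\}$ at radius $\gamma$, $\{\lambda\alpha_i/\gamma^{2}\}$ at radius $\lambda$, and $\{\alpha_i/\gamma\}$ at radius $\lambda$. A short estimate using $\max_i|\alpha_i|<\gamma<\lambda$, together with the convention $\gamma,\lambda>1$ under which the scaled polynomials $F_{\gamma},F_{\lambda}$ are always considered (as in Proposition~\ref{Blaschke-Blaschke Equivalency}), shows each such modulus lies below the relevant radius — for instance $|\lambda\alpha_i/\gamma^{2}|=\lambda|\alpha_i|/\gamma^{2}<\lambda$ because $|\alpha_i|<\gamma<\gamma^{2}$ — so every factorization really is in the all-roots-captured regime and the closed form of Step~1 applies verbatim. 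This is the one place where the ordering of $\gamma$ and $\lambda$ (and the normalization of the roots) enters; with it settled, the remaining algebra is of exactly the type already carried out for Proposition~\ref{Blaschke-Blaschke Equivalency}.
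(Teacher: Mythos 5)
Your handling of the first identity is correct and is essentially the paper's own argument: write both quotients in closed form and compare, the paper's $G_{\lambda}=\prod_i \frac{1}{\gamma}\left(\gamma^{2}-\frac{\overline{\alpha_i}}{\lambda}z\right)$ and $G_{\gamma}=\left(\frac{\gamma}{\lambda}\right)^{n}\prod_i\frac{1}{\lambda}\left(\lambda^{2}-\frac{\lambda}{\gamma^{2}}\overline{\alpha_i}z\right)$ being exactly your $(\gamma\lambda)^{-n}\prod_i(\gamma^{2}\lambda-\overline{\alpha_i}z)$. You are in fact slightly more careful than the paper about why the dilated roots $\lambda\alpha_i/\gamma^{2}$ are captured at radius $\lambda$ (one needs $|\alpha_i|<\gamma^{2}$, i.e.\ the convention $\gamma\geq 1$; the paper only remarks $|\alpha_i|<\gamma,\lambda$).

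There is, however, a genuine gap in your second identity: you prove it only in the all-roots-captured regime, invoking $\max_i|\alpha_i|<\gamma<\lambda$, whereas the ``alternatively'' clause is precisely the version that is meant to hold \emph{without any assumption on the norms of the roots}. This is how the paper proves it — it fixes the index $m$ with $|\alpha_i|<\gamma\lambda$ for $i\leq m$ and $|\alpha_i|>\gamma\lambda$ for $i>m$, and verifies the identity with mixed products of captured factors $\frac{1}{\lambda}\left(\lambda^{2}-\frac{\overline{\alpha_i}}{\gamma}z\right)$ and uncaptured factors $\left(z-\frac{\alpha_i}{\gamma}\right)$ — and this generality is actually used later: the second claim of Proposition \ref{Blaschke-Blaschke Equivalency} (stated explicitly ``without any assumptions regarding the norms of the roots'') is deduced by setting $\gamma=1$ in Theorem \ref{ScalingCorollary}, and your version would not deliver it. The observation you are missing is that a root $\alpha_i$ is captured in $F_{\gamma}=B_{\lambda}G_{\gamma}$ exactly when $|\alpha_i|<\gamma\lambda$, which is the same condition as for $F_{\lambda}=B_{\gamma}G_{\lambda}$; hence the two factorizations capture the same index set, and the identity can be checked factor by factor: for a captured root, $\gamma\cdot\frac{1}{\lambda}\left(\lambda^{2}-\frac{\overline{\alpha_i}}{\gamma}\lambda z\right)=\gamma\lambda-\overline{\alpha_i}z=\lambda\cdot\frac{1}{\gamma}\left(\gamma^{2}-\frac{\overline{\alpha_i}}{\lambda}\gamma z\right)$, and for an uncaptured root, $\gamma\left(\lambda z-\frac{\alpha_i}{\gamma}\right)=\gamma\lambda z-\alpha_i=\lambda\left(\gamma z-\frac{\alpha_i}{\lambda}\right)$. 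With that split added, your computation becomes the paper's proof.
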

    
    \noindent{While Corollary \ref{Blaschke-Blaschke Equivalency} provided an equivalence between canonical Blaschke factorization and $r$-Blaschke factorization, this Theorem provides an equivalence between two $r$-Blaschke factorizations for different radii $\lambda$ and $\gamma$. Indeed, this statement is equivalent to Proposition \ref{Blaschke-Blaschke Equivalency} if $\gamma = 1$. Most importantly, it tells us that one can always explicitly relate $\gamma$-Blaschke and $\lambda$-Blaschke using scalar constants. The following Proposition is adapted from \cite{BlaschkeSurvey} to work with the $r$-Blaschke framework.}
    
    \begin{proposition} \label{def: LogDeriv}
    For $\C \setminus \{ \alpha_i, r^2/\overline{\alpha_i} : 1 \leq i \leq n \}$, the logarithmic derivative of an $r$-Blaschke product is
        \[ \frac{B_r'(z)}{B_r(z)} = \sum_{j=1}^{n} \frac{r^2 - |\alpha_j|^2}{(r^2 - \overline{\alpha_j}z) (z-\alpha_j)}. \]
    \noindent{Therefore, we have}
        \[ \left| B_r'(re^{it}) \right| = \sum_{j=1}^{n} \frac{r^2 - |\alpha_j|^2}{r \left| e^{it} -\alpha_j \right|^2}. \]
    \end{proposition}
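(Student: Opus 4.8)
The plan is to differentiate the product formula for $B_r$ logarithmically and then restrict to the circle $|z|=r$.

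\textbf{Step 1 (the interior identity).} On the open set $\C\setminus\{\alpha_j,\,r^2/\overline{\alpha_j}\}$ each factor $\frac{r(z-\alpha_j)}{r^2-\overline{\alpha_j}z}$ of $B_r$ is holomorphic and nonvanishing, so $\log B_r$ admits local branches and $\frac{B_r'(z)}{B_r(z)}=\sum_{j=1}^n\frac{d}{dz}\log\!\frac{r(z-\alpha_j)}{r^2-\overline{\alpha_j}z}=\sum_{j=1}^n\Bigl(\frac{1}{z-\alpha_j}+\frac{\overline{\alpha_j}}{r^2-\overline{\alpha_j}z}\Bigr)$, the constant $r$ contributing nothing. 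Placing the two terms of the $j$-th summand over the common denominator $(z-\alpha_j)(r^2-\overline{\alpha_j}z)$, the numerator collapses to $(r^2-\overline{\alpha_j}z)+\overline{\alpha_j}(z-\alpha_j)=r^2-|\alpha_j|^2$, which is exactly the asserted formula. This is a one-line cancellation with no obstacle.

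\textbf{Step 2 (the boundary formula).} Set $z=re^{it}$. The key algebraic identity is $r^2-\overline{\alpha_j}re^{it}=re^{it}\bigl(re^{-it}-\overline{\alpha_j}\bigr)=re^{it}\,\overline{\,re^{it}-\alpha_j\,}$, so the $j$-th denominator factors as $(r^2-\overline{\alpha_j}re^{it})(re^{it}-\alpha_j)=re^{it}\,\bigl|re^{it}-\alpha_j\bigr|^2$. Hence $\frac{B_r'(re^{it})}{B_r(re^{it})}=\frac{1}{re^{it}}\sum_{j=1}^n\frac{r^2-|\alpha_j|^2}{|re^{it}-\alpha_j|^2}$. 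Because every $\alpha_j$ lies in $\D_r$, each numerator $r^2-|\alpha_j|^2$ is positive, so the sum is a positive real number; moreover the same factorization gives $\bigl|\frac{r(re^{it}-\alpha_j)}{r^2-\overline{\alpha_j}re^{it}}\bigr|=1$ for each $j$, so $|B_r(re^{it})|=1$ on all of $\partial\D_r$ (refining the pointwise computation $|B_r(r)|=1$ given earlier). Taking moduli then yields the stated expression for $|B_r'(re^{it})|$ after dividing by $r$.

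\textbf{Main obstacle.} There is essentially none: the only step demanding care is the conjugate-symmetry identity $r^2-\overline{\alpha_j}re^{it}=re^{it}\,\overline{re^{it}-\alpha_j}$ and the consequent disappearance of the unimodular factor $e^{it}$ once absolute values are taken; the remainder is routine partial-fraction bookkeeping.
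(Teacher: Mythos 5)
Your proof is correct and takes essentially the same route as the paper's: compute the logarithmic derivative termwise, then restrict to $|z|=r$ using $r^2=z\overline{z}$ (equivalently $r^2-\overline{\alpha_j}re^{it}=re^{it}\,\overline{re^{it}-\alpha_j}$), and take moduli using $|B_r(re^{it})|=1$ and the positivity of each summand. Note that your derivation produces $|re^{it}-\alpha_j|^2$ in the denominator, which agrees with the paper's own proof; the $|e^{it}-\alpha_j|^2$ appearing in the stated formula is evidently a typo for $|re^{it}-\alpha_j|^2$.
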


    \begin{theorem}[One-Step $r$-Blaschke] \label{OneStepR}
        \textit{Let $F$ be holomorphic on a neighborhood of the unit disk and $\alpha \in \C$ with $|\alpha|<r$. If}
            \[ f = (z-\alpha) F \textit{\hspace{5mm} and \hspace{5mm}} g = \frac{1}{r} \left( r^2 - \overline{\alpha}z \right) F \]
        \textit{then}
            \[ \int_{0}^{2\pi} \left| g'(re^{it}) \right|^2 \: dt \leq \int_{0}^{2\pi} \left| f'(re^{it}) \right|^2 \: dt - \left( 1 - \frac{|\alpha|^2}{r^2} \right) \int_{0}^{2\pi} \left| F(re^{it}) \right|^2 \: dt \]
        \textit{whenever all terms are defined and finite.}
    \end{theorem}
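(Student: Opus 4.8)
The plan is to localize to the circle $\partial\D_r=\{\,|z|=r\,\}$, derive a pointwise identity there, and then integrate.

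\smallskip
\textbf{Step 1 (a pointwise identity on $\partial\D_r$).} Differentiating the products, $f'=F+(z-\alpha)F'$ and $g'=-\tfrac{\overline\alpha}{r}F+\tfrac1r(r^2-\overline\alpha z)F'$. On $|z|=r$ one has the Blaschke-swap identity $|z-\alpha|=\bigl|\tfrac1r(r^2-\overline\alpha z)\bigr|$ — this follows by expanding both squared moduli and using $|z|^2=r^2$, and it is the very same fact that makes $|B_r|\equiv1$ on $\partial\D_r$. Hence the $|F'|^2$ contributions to $|f'|^2$ and $|g'|^2$ agree and cancel. Expanding what remains, pushing a conjugation inside one of the two resulting $\mathrm{Re}$ terms, and using the algebraic identity $(z-\alpha)+\tfrac{\alpha}{r^2}(r^2-\overline\alpha z)=\bigl(1-\tfrac{|\alpha|^2}{r^2}\bigr)z$, the two cross terms collapse into a single one, giving, for every $z\in\partial\D_r$,
\[ |f'(z)|^2-|g'(z)|^2=\Bigl(1-\frac{|\alpha|^2}{r^2}\Bigr)\Bigl(|F(z)|^2+2\,\mathrm{Re}\bigl[zF'(z)\overline{F(z)}\bigr]\Bigr). \]

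\smallskip
\textbf{Step 2 (integrate and discard a nonnegative term).} Parametrize $z=re^{it}$ and integrate in $t$. It then suffices to show $\int_0^{2\pi}\mathrm{Re}\bigl[re^{it}F'(re^{it})\overline{F(re^{it})}\bigr]\,dt\ge 0$. Writing $F(z)=\sum_{n\ge 0}c_nz^n$, so that $zF'(z)=\sum_{n\ge 1}nc_nz^n$, the orthogonality relations $\int_0^{2\pi}e^{ikt}\,dt=2\pi\delta_{k0}$ yield
\[ \int_0^{2\pi}re^{it}F'(re^{it})\overline{F(re^{it})}\,dt=2\pi\sum_{n\ge 1}n\,|c_n|^2\,r^{2n}\ \ge\ 0, \]
the interchange of sum and integral (and finiteness) being exactly what the hypothesis ``whenever all terms are defined and finite'' licenses. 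Since $|\alpha|<r$ forces $1-|\alpha|^2/r^2>0$, dropping this nonnegative quantity in the integrated identity gives $\int_0^{2\pi}|g'(re^{it})|^2\,dt\le\int_0^{2\pi}|f'(re^{it})|^2\,dt-\bigl(1-\tfrac{|\alpha|^2}{r^2}\bigr)\int_0^{2\pi}|F(re^{it})|^2\,dt$, which is the assertion.

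\smallskip
\textbf{An alternative route, and the main difficulty.} Conceptually this is the $r$-analogue of the classical one-step contraction lemma, and one could instead \emph{reduce} to that case: set $z=rw$, $\beta=\alpha/r\in\D$, $\widetilde F(w)=F(rw)$, so that $f(rw)=r(w-\beta)\widetilde F(w)$ and $g(rw)=r(1-\overline\beta w)\widetilde F(w)$; the chain rule turns $\int_0^{2\pi}|f'(re^{it})|^2\,dt$ and $\int_0^{2\pi}|g'(re^{it})|^2\,dt$ into the corresponding integrals over $\partial\D$ for the pair $(w-\beta)\widetilde F,\ (1-\overline\beta w)\widetilde F$, and the claim becomes the $r=1$ statement of \cite{CoifmanStefan}. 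Either way there is no deep obstacle: the only place requiring care is the conjugation bookkeeping in Step 1, so that the two cross terms genuinely combine into a single multiple of $\mathrm{Re}[zF'\overline F]$, and the sole analytic input is the elementary Fourier computation of Step 2.
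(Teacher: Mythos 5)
Your proof is correct and follows essentially the same route as the paper's: expand $|f'|^2$ and $|g'|^2$, cancel the $|F'|^2$ terms via $|z-\alpha|=\tfrac1r|r^2-\overline{\alpha}z|$ on $|z|=r$, collapse the cross terms through $(z-\alpha)+\tfrac{\alpha}{r^2}(r^2-\overline{\alpha}z)=\bigl(1-\tfrac{|\alpha|^2}{r^2}\bigr)z$, and discard the remaining term $2\,\mathrm{Re}\bigl[zF'\overline{F}\bigr]$ after integration. Your Step 2 Fourier-coefficient computation makes explicit the nonnegativity that the paper only asserts, which is a small but welcome addition.
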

    
    
\section{Proofs}\label{sec: Proofs}

    \noindent{In this section, we outline proofs for all Theorems and Propositions:}
    
    \begin{enumerate}
        \item Theorem 2.1
        \item Theorem 2.2
        \item Proposition 2.2 ($\lambda$-Blaschke)
        \item Theorem 4.1 (Generalized Operator Equivalence)
        \item Proposition 2.1 (Blaschke and $r$-Blaschke Equivalence)
        \item Proposition 4.1
        \item Theorem 4.2 (One-Step $r$-Blaschke) \\
    \end{enumerate}
    
    \noindent{We begin by establishing two lemmas for a couple of small facts regarding the coefficients of the polynomial $F_{\lambda}$. Recall that}
        \[ F_{\lambda}(z) = \prod_{i=1}^{n} \left( z - \frac{\alpha_i}{\lambda} \right). \]

    \begin{lemma}
    \noindent{Denote $c_k$ as the coefficient of $z^k$ of $F_\lambda$. Then}
       \[ |c_k|  \leq \binom{n}{k}\left(\frac{\varepsilon}{\lambda}\right)^{n-k}.\]
       
    \noindent{The above helps us write our results in terms of scaling all the roots down to fit within a certain small disk. Alternatively, a more robust result states}
     \[ |c_k| \leq \binom{n}{k} \cdot \left(\frac{1}{n}\sum_{i=1}^n |\alpha_i/\lambda|^n\right)^{1-k/n}. \]

   \noindent{Thus, we require only the $n$-th power mean of the norms of the roots to be small for the bound to hold.}
   \end{lemma}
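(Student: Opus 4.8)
The plan is to read off $c_k$ from Vieta's formulas and then estimate elementary symmetric polynomials of the root moduli. Write $\beta_i = \alpha_i/\lambda$, so that expanding $F_\lambda(z) = \prod_{i=1}^n (z-\beta_i)$ gives $c_k = (-1)^{n-k}\,e_{n-k}(\beta_1,\dots,\beta_n)$, where $e_m$ denotes the $m$-th elementary symmetric polynomial in $n$ variables. Since every monomial of $e_m$ is a product of nonnegative quantities after taking absolute values, the triangle inequality gives $|c_k| \le e_{n-k}(|\beta_1|,\dots,|\beta_n|)$, so it suffices to bound $e_m(x_1,\dots,x_n)$ for the nonnegative reals $x_i = |\alpha_i|/\lambda$, with $m = n-k$.

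For the first (coarser) inequality, I would simply note that $e_m(x)$ is a sum of $\binom{n}{m} = \binom{n}{k}$ monomials, each of which is a product of exactly $m$ of the $x_i$ and hence at most $(\varepsilon/\lambda)^m$, where $\varepsilon = \max_i |\alpha_i|$. Summing the $\binom{n}{k}$ terms yields $|c_k| \le \binom{n}{k}(\varepsilon/\lambda)^{n-k}$.

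For the second (sharper) inequality the main step is an AM--GM estimate carried out monomial by monomial: for each $m$-element subset $S \subseteq \{1,\dots,n\}$ we have $\prod_{i\in S} x_i = \bigl(\prod_{i\in S} x_i^{m}\bigr)^{1/m} \le \frac{1}{m}\sum_{i\in S} x_i^{m}$. Summing over all $\binom{n}{m}$ subsets $S$ and using that each fixed index $i$ lies in $\binom{n-1}{m-1} = \frac{m}{n}\binom{n}{m}$ of them, the factors of $m$ cancel and one gets $e_m(x) \le \binom{n}{m}\cdot\frac{1}{n}\sum_{i=1}^n x_i^{m}$. Finally I would invoke the power-mean inequality $\bigl(\frac{1}{n}\sum_i x_i^{m}\bigr)^{1/m} \le \bigl(\frac{1}{n}\sum_i x_i^{n}\bigr)^{1/n}$, valid because $m \le n$, to conclude $e_m(x) \le \binom{n}{m}\bigl(\frac{1}{n}\sum_i x_i^{n}\bigr)^{m/n}$; substituting $m = n-k$ and $x_i = |\alpha_i/\lambda|$ gives exactly $|c_k| \le \binom{n}{k}\bigl(\frac{1}{n}\sum_{i=1}^n |\alpha_i/\lambda|^n\bigr)^{1-k/n}$.

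No step here is genuinely hard; the only places to be careful are the combinatorial bookkeeping in the double sum — in particular the identity $\binom{n-1}{m-1} = \frac{m}{n}\binom{n}{m}$ — and invoking the correct (monotone-increasing) direction of the power-mean inequality. I would present the two bounds in the order above and remark that the first is in fact subsumed by the argument for the second, since $\frac{1}{n}\sum_i |\alpha_i/\lambda|^n \le (\varepsilon/\lambda)^n$, so the common computation $e_m(x) \le \binom{n}{m}\cdot\frac{1}{n}\sum_i x_i^m$ can be stated once and specialized both ways.
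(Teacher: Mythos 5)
Your proposal is correct and follows essentially the same route as the paper: triangle inequality applied to the Vieta expansion of $c_k$, a comparison of the elementary symmetric sum with the power sum giving the factor $\binom{n}{k}\cdot\frac{1}{n}\sum_i |\alpha_i/\lambda|^{n-k}$, and then the power-mean inequality to pass from the $(n-k)$-th to the $n$-th power mean. The only difference is cosmetic: where the paper cites Muirhead's inequality for the tuples $(n-k,0,\dots,0)$ and $(1,\dots,1,0,\dots,0)$, you prove that special case by hand via AM--GM on each monomial together with the count $\binom{n-1}{m-1}=\frac{m}{n}\binom{n}{m}$, which is a perfectly valid (and self-contained) substitute.
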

    \begin{proof}
      \noindent{Observe}
       \[ |c_k| = \Bigg| \sum_{|S| = n-k} \left[ \prod_{i \in S} \frac{\alpha_i}{\lambda} \right] \Bigg|\leq \sum_{|S| = n-k} \left|\prod_{i\in S}\frac{\alpha_i}{\lambda}\right| \leq \binom{n}{k}\left(\frac{\varepsilon}{\lambda}\right)^{n-k}.\]
       \noindent{To prove the second bound, we use Muirhead's inequality, which states}
       \[\sum_{\text{sym}}x_1^{a_1}\ldots x_n^{a_n} \ge \sum_{\text{sym}} x_1^{b_1}\ldots x_n^{b_n}\]
       \noindent{for nonnegative integers $a_1,\ldots,a_n,b_1,\ldots,b_n,x_1,\ldots,x_n,$ where $(a_1,\ldots,a_n)$ majorizes $(b_1,\ldots,b_n).$ In particular, the inequality holds for $(a_1,\ldots,a_n) = (n-k,0,\ldots,0)$ and $(b_1,\ldots,b_n) = (1,\ldots,1,0,\ldots,0),$ where there are $n-k$ 1's. We also use the Power Mean inequality, which states}
       \[\sqrt[a]{\frac{x_1^a+\ldots+x_n^a}{n}} \ge \sqrt[b]{\frac{x_1^b+\ldots+x_n^b}{n}}\]
       \noindent{for nonnegative integers $x_1,\ldots,x_n,$ and nonnegative integers $a \ge b.$ Now observe}
            \[ |c_k| = \Bigg| \sum_{|S| = n-k} \left[ \prod_{i \in S} \frac{\alpha_i}{\lambda} \right] \Bigg| \leq \sum_{|S| = n-k} \left|\prod_{i\in S}\frac{\alpha_i}{\lambda}\right|\]
        
    \noindent{and use Muirhead's inequality and the Power Mean inequality to obtain}
    
     \[ |c_k| \leq \sum_{i=1}^n |\alpha_i/\lambda|^{n-k} \cdot {n \choose k}/n\leq \binom{n}{k} \cdot \left(\frac{1}{n}\sum_{i=1}^n |\alpha_i/\lambda|^n\right)^{1-k/n}. \]
    \end{proof}
    \begin{lemma}
    \noindent{\textit{Without loss of generality, assume the leading coefficient $c_n$ of $F_\lambda$ is 1. Recall}}
        \[ F_\lambda (z) = \prod_{i=1}^{n} \left( z - \frac{\alpha_i}{\lambda} \right) = \sum_{k=0}^{n} c_k z^k \textrm{\hspace{5mm} and \hspace{5mm}} G_\lambda (z) = \prod_{i=1}^{n} \left(1-\frac{\overline{\alpha_i}}{\lambda}\right) = \sum_{k=0}^n d_k z^k. \]
   \noindent{\textit{Then, the coefficients $c_k$ of $F_\lambda$ and $d_k$ of $G_\lambda$ satisfy}}
       \[c_k = \overline{d_{n-k}}.\]
    \end{lemma}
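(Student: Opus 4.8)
The plan is to recognize $G_\lambda$ as the conjugate-reciprocal polynomial of $F_\lambda$ and simply read off the coefficient identity. Write $\beta_i = \alpha_i/\lambda$; since $\lambda \in \mathbb{R}$ we have $\overline{\beta_i} = \overline{\alpha_i}/\lambda$, so $F_\lambda(z) = \prod_{i=1}^n (z - \beta_i)$ and (interpreting the displayed product in the obvious way, i.e.\ with the missing factor of $z$ restored) $G_\lambda(z) = \prod_{i=1}^n (1 - \overline{\beta_i} z)$.

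First I would establish the functional identity $G_\lambda(z) = z^n\,\overline{F_\lambda(1/\bar z)}$ for $z \neq 0$. Indeed,
\[ z^n\,\overline{F_\lambda(1/\bar z)} = z^n \prod_{i=1}^n \overline{\left(\tfrac{1}{\bar z} - \beta_i\right)} = z^n \prod_{i=1}^n \left(\tfrac1z - \overline{\beta_i}\right) = \prod_{i=1}^n\left(1 - \overline{\beta_i} z\right) = G_\lambda(z). \]
Next I would substitute the expansion $F_\lambda(z) = \sum_{k=0}^n c_k z^k$ into the right-hand side: since $\overline{F_\lambda(1/\bar z)} = \sum_{k=0}^n \overline{c_k}\, z^{-k}$, multiplying by $z^n$ gives $G_\lambda(z) = \sum_{k=0}^n \overline{c_k}\, z^{n-k} = \sum_{j=0}^n \overline{c_{n-j}}\, z^j$. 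Comparing with $G_\lambda(z) = \sum_{j=0}^n d_j z^j$ yields $d_j = \overline{c_{n-j}}$ for every $j$, which is exactly $c_k = \overline{d_{n-k}}$.

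Alternatively, essentially the same computation can be phrased without the substitution $z \mapsto 1/\bar z$: expanding both products in elementary symmetric polynomials gives $c_k = (-1)^{n-k} e_{n-k}(\beta_1,\ldots,\beta_n)$ and $d_{n-k} = (-1)^{n-k} e_{n-k}(\overline{\beta_1},\ldots,\overline{\beta_n})$, and one then uses that $e_{n-k}$ has integer (hence real) coefficients, so $e_{n-k}(\overline{\beta}) = \overline{e_{n-k}(\beta)}$.

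There is no genuine obstacle here; this is a routine identity. The only points requiring care are that the conjugate-reciprocal relationship relies on $\lambda \in \mathbb{R}$, so that the scaled roots defining $G_\lambda$ are precisely the complex conjugates of those defining $F_\lambda$, and that the formula for $G_\lambda$ in the statement should be read as $\prod_{i=1}^n\bigl(1 - \tfrac{\overline{\alpha_i}}{\lambda} z\bigr)$.
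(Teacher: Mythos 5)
Your proposal is correct. It turns on the same conjugate--reciprocal identity that drives the paper's argument (the paper writes it as $\overline{F_\lambda(z)/z^{n}} = H(1/\overline{z})$), but you run it in the opposite, more direct, direction: you compute $G_\lambda(z) = z^{n}\,\overline{F_\lambda(1/\overline{z})}$ straight from the product formula and read off $d_j = \overline{c_{n-j}}$ by expanding, whereas the paper first \emph{defines} an auxiliary polynomial $H$ by the claimed coefficient relation and then argues $H = G_\lambda$ by matching roots (inversion of captured roots across the unit circle, with zero roots ``disappearing'') together with the normalization $H(0)=d_0=\overline{c_n}=1$. Your route buys a cleaner argument: it never mentions the roots of $G_\lambda$, so there is no bookkeeping about multiplicities or roots at the origin, and the symmetric-function variant you sketch makes the reality assumption explicit. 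You were also right to flag the two points the paper leaves implicit: the displayed formula for $G_\lambda$ is missing a factor of $z$ (it should read $\prod_{i=1}^{n}\bigl(1-\tfrac{\overline{\alpha_i}}{\lambda}z\bigr)$), and the identification of the scaled conjugate roots $\overline{\alpha_i}/\lambda$ with $\overline{\alpha_i/\lambda}$ uses $\lambda \in \mathbb{R}$.
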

    \begin{proof}
   
   \noindent{Let $H$ be the polynomial with coefficients $d_k$ satisfying the above relation; we will show $G_\lambda = H.$ Indeed, note that as all roots of $F_\lambda$ are captured via Blaschke, all nonzero roots $\alpha_i$ are sent to $1/\overline{\alpha_i}$, while its zero roots disappear, and}
        \[G(0) = \prod_{i=1}^n (1-\overline{\alpha_i}(0)) = 1.\]
   \noindent{Observe now that for $z\neq 0$}
        \[\overline{F_\lambda(z)/z^{n}} = H (1/\overline{z})\]
   \noindent{and}
        \[H(0)=d_0=\overline{c_n}=1.\]
   \noindent{Thus, the roots of $H$ are exactly those of $G_\lambda$, and $H$ and $G_\lambda$ are both monic, so $H=G_\lambda$, as desired.}
   
    \end{proof}
    
\subsection{Proof of Theorem \ref{thm: ExponentialConvergence}}
    
    \begin{proof}
    \noindent{The general idea of this proof is as follows. We proceed through a series of approximations using an undetermined parameter $\delta$, to obtain some conditions on $\lambda$, which, if satisfied, will give the theorem's result. Then, we retroactively pick a specific value of $\delta$ such that the demands on $\lambda$ are minimized. If one does not care about the size of $\lambda$, a much shorter proof could be given, but it is a reasonable goal to not use $\lambda$'s which are larger than needed.}\\

    \noindent{\emph{Part 1: Conditions on $\lambda$.} For simplicity, we want all roots of $F_\lambda$ to be captured via Blaschke factorization, so we at least assume $\lambda > \varepsilon$ where $\varepsilon = \max \{ |\alpha_i| \}$. Next, we bound the quantity}
        \[ \frac{1-|\alpha/\lambda|^2}{|z-\alpha/\lambda|^2} \]
    \noindent{on the unit disk from below by $1-\delta$, $0<\delta<1$, with the exact value of $\delta$ to be stipulated later. Since $|z|=1$, we have}
        \[ \frac{1-|\alpha/\lambda|^2}{|z-\alpha/\lambda|^2} \geq 
        \frac{1-|\alpha/\lambda|^2}{(1+|\alpha/\lambda|)^2} \geq
        \frac{1-(\varepsilon/\lambda)^2}{(1+\varepsilon/\lambda)^2}. \]
    \noindent{Hence, it is sufficient to show}
        \[ \frac{1-(\varepsilon/\lambda)^2}{(1+\varepsilon/\lambda)^2}>1-\delta. \]
    \noindent{Solving the inequality for $\lambda$ gives us}
        \[ \lambda> \left( \frac{2-\delta}{\delta} \right) \varepsilon. \]

    \noindent{Let us call this condition on $\lambda$ ``I''. Now, we assume this condition is satisfied. In \cite{CoifmanStefan}, Coifman and Steinerberger showed that}
        \[ \| F_{\lambda} \|_{\mathcal{D}}^2 - \| G_{\lambda} \|_{\mathcal{D}}^2 = \int_{0}^{2\pi} \left| G_{\lambda}(e^{it}) \right|^2 \sum_{i=1}^{n} \frac{1- |\alpha_i|^2}{| z - \alpha_i |^2} \: dt. \]
        
    \noindent{Our goal is thus to show}
        \[ ||F_\lambda||^2_\mathcal{D}=||G_\lambda||^2_\mathcal{D}+\frac{1}{2}\int_{d\mathcal{D}}|G|^2\sum_i\frac{1-|\alpha_i/\lambda|^2}{|z-\alpha_i/\lambda|^2} \geq 2||G_\lambda||^2_\mathcal{D} \]
    \noindent{or} 
         \[ \frac{1}{2}\int_{d\mathcal{D}}|G|^2\sum_i\frac{1-|\alpha_i/\lambda|^2}{|z-\alpha_i/\lambda|^2} \geq ||G_\lambda||^2_\mathcal{D} \]
    \noindent{so, by using the bound from above, it is sufficient to show} 
        \[ \frac{(1-\delta)n}{2}||G_\lambda||^2_{\mathcal{L}^2}\geq||G_\lambda||^2_\mathcal{D}. \]
    \noindent{Writing this in coefficient form, we have}
        \[ \frac{(1-\delta)n}{2}\sum_{k=0}^n|g_k|^2\geq\sum_{k=1}^nk|g_k|^2 \]
    \noindent{so we need}
        \[ \sum_{k=0}^n\left(1-\frac{2k}{(1-\delta)n}\right)|g_k|^2= 1+\sum_{k=1}^n\left(1-\frac{2k}{(1-\delta)n}\right)|g_k|^2\geq0 \]
    \noindent{where $g_k$ are the coefficients of $G_\lambda$. Notice that for}
        \[ k<\frac{(1-\delta)n}{2} \]
    \noindent{the terms in the sum are positive, and for}             
        \[ k>\frac{(1-\delta)n}{2} \]
    \noindent{they are negative. Thus, let}
        \[ c=\left\lceil\frac{(1-\delta)n}{2}\right\rceil. \]
    \noindent{Then, it is sufficient to show}             
        \[ \left|\sum_{k=c}^n\left(1-\frac{2k}{(1-\delta)n}\right)|g_k|^2\right|\leq 1.\]
    \noindent{We find that}
        \begin{align*}
            \sum_{k=c}^n\left|1-\frac{2k}{(1-\delta)n}\right||g_k|^2 &\leq \frac{1+\delta}{1-\delta} \sum_{k=c}^n|g_k|^2 \\
            &=\frac{1+\delta}{1-\delta}\sum_{k=c}^n|c_{n-k}|^2 \\
            &\leq \frac{1+\delta}{1-\delta}\sum_{k=c}^n\left({{{n}\choose{k}}\left(\frac{\varepsilon}{\lambda}\right)^k}\right)^2.
        \end{align*}
    \noindent{We now will bound the sum from above by a geometric series. First, let}
        \[ A(k)={{n}\choose{k}} \left(\frac{\varepsilon}{\lambda}\right)^k \]
    \noindent{and define $U(k)=A(k+1)/A(k)$, such that then $A(k+1)=U(k)A(k)$. By calculation,}
        \[ U(k)= \frac{A(k+1)}{A(k)}=\frac{\varepsilon}{\lambda}\frac{n-k}{k+1}. \]
    \noindent{Notice that as $k$ increases, $U(k)$ decreases. Thus, let us first demand that $U(c)<1$. This means that}
        \[ \lambda>\varepsilon\frac{n-c}{c+1}. \]
    \noindent{Call this condition on $\lambda$ ``II,'' and assume that $\lambda$ satisfies this condition. If so, then we know we can bound $A(k)$, for $n\geq k\geq c$, from above by $A(c)U(c)^{k-c}$. Hence, we have}
        \[ A(k)^2\leq A(c)^2U(c)^{2(k-c)}= {{n}\choose{c}}^2\left(\frac{\varepsilon}{\lambda}\right)^{2c}\left(\frac{\varepsilon}{\lambda}\right)^{2(k-c)}\left(\frac{n-c}{c+1}\right)^{2(k-c)}.\]
    \noindent{Therefore,}
        \begin{align*}
            \frac{1+\delta}{1-\delta}\sum_{k=c}^n\left({{{n}\choose{k}}\left(\frac{\varepsilon}{\lambda}\right)^k}\right)^2 &\leq \frac{1+\delta}{1-\delta}{{n}\choose{c}}^2\left(\frac{\varepsilon}{\lambda}\right)^{2c}\sum_{k=c}^n\left({\left(\frac{\varepsilon}{\lambda}\frac{n-c}{c+1}\right)^{2}}\right)^{(k-c)} \\
            &\leq \frac{1+\delta}{1-\delta}{{n}\choose{c}}^2\left(\frac{\varepsilon}{\lambda}\right)^{2c}\sum_{k=c}^\infty\left({\left(\frac{\varepsilon}{\lambda}\frac{n-c}{c+1}\right)^{2}}\right)^{(k-c)} \\
            &= \frac{1+\delta}{1-\delta}{{n}\choose{c}}^2\left(\frac{\varepsilon}{\lambda}\right)^{2c}\frac{1}{1-\left(\frac{\varepsilon}{\lambda}\frac{n-c}{c+1}\right)^2}.
        \end{align*}
    \noindent{Hence, it is sufficient to show that this quantity is less than 1. Equivalently, it suffices to show}
        \[ \lambda>\varepsilon\frac{\left(\frac{1+\delta}{1-\delta}\right)^{\frac{1}{2c}}{{n}\choose{c}}^{\frac{1}{c}}}{\left(1-\left(\frac{\varepsilon}{\lambda}\frac{n-c}{c+1}\right)^2\right)^{\frac{1}{c}}}. \]
    \noindent{Call this condition on $\lambda$ ``III.'' As of now, this is not an explicit condition on $\lambda$, but will become so after we pick a specific $\delta$. Now, we have three conditions on $\lambda$, which would give the result of the theorem if satisfied. \\}

    \noindent{\emph{Part 2: The choice of $\delta$.} The choice of $\delta$ affects the size of $\lambda$ in a non-trivial way. We shall pick a good value of $\delta$ by considering the behavior of Conditions I, II, and III as $n\rightarrow\infty$ (where the conditions become simplified). Furthermore, since all three conditions take the form of a quantity multiplied by $\varepsilon$, we will ignore this $\varepsilon$ from our calculations. Condition I does not depend on $n$, so let}
        \[ \mbox{I}_\infty(\delta)=\frac{2-\delta}{\delta}. \]
    \noindent{We can first simplify the form of Condition II by noting that} 
        \[ c=\left\lceil\frac{(1-\delta)n}{2}\right\rceil \]
    \noindent{implies}
        \[ c\geq\frac{(1-\delta)n}{2}. \]
    \noindent{Thus, Condition II can be satisfied if}
        \[ \lambda>\varepsilon\cdot\frac{n-\frac{(1-\delta)n}{2}}{\frac{(1-\delta)n}{2}+1}=\varepsilon\cdot\frac{n(1+\delta)}{2+(1-\delta)n}. \]
    \noindent{Consider this our new Condition II. Then,}
        \[ \lim_{n\to\infty}\frac{n(1+\delta)}{2+(1-\delta)n}=\frac{1+\delta}{1-\delta}=\mbox{II}_\infty(\delta). \]
    \noindent{For Condition III, we first will rewrite the limit in terms of $c$. Since we know that}
        \[ \frac{(1-\delta)n}{2}\leq c<\frac{(1-\delta)n}{2}+1 \]
    \noindent{it follows that}
        \[ \frac{2(c-1)}{1-\delta}<n\leq\frac{2c}{1-\delta}. \]
    \noindent{Therefore, asymptotically we see}
        \[ n\sim\frac{2c}{1-\delta}. \]
    \noindent{Then, in the limit}
        \[ \frac{n-c}{c+1}=\frac{\frac{2c}{1-\delta}-c}{c+1}=\frac{c(1+\delta)}{(c+1)(1-\delta)} \]
    \noindent{so the limit becomes}
        \[ \lim_{n\to\infty}\left(\frac{\left(\frac{1+\delta}{1-\delta}\right)^{\frac{1}{2}}{{n}\choose{c}}}{1-\left(\frac{\varepsilon}{\lambda}\frac{n-c}{c+1}\right)^2}\right)^{\frac{1}{c}}= \lim_{c\to\infty}\left(\frac{\left(\frac{1+\delta}{1-\delta}\right)^{\frac{1}{2}}{{\left\lceil\frac{2c}{1-\delta}\right\rceil}\choose{c}}}{1-\left(\frac{\varepsilon}{\lambda}\frac{c(1+\delta)}{(c+1)(1-\delta)}\right)^2}\right)^{\frac{1}{c}}=  \lim_{c\to\infty}{{\left\lceil\frac{2c}{1-\delta}\right\rceil}\choose{c}}^{\frac{1}{c}}. \]
    \noindent{There are no problems with $\lambda$ being in the limit since it is simply a constant that was assumed to satisfy some conditions. Next, we use Stirling's approximation, which states that}
    \[n! \sim \sqrt{2\pi n}\left(\frac{n}{e}\right)^n. \]
    \noindent{Thus, by Stirling's approximation, the aforementioned limit becomes}
        \begin{align*}
            \lim_{c\to\infty}\left(\frac{\sqrt{2\pi\frac{2c}{1-\delta}}\left(\frac{2c}{(1-\delta)\mathrm{e}}\right)^{\frac{2c}{1-\delta}}}{\sqrt{2\pi\frac{c(1+\delta)}{1-\delta}}\left(\frac{c(1+\delta)}{\mathrm{e}(1-\delta)}\right)^{\frac{c(1+\delta)}{1-\delta)}}\sqrt{2\pi c}\left(\frac{c}{\mathrm{e}}\right)^c}\right)^{\frac{1}{c}} &= \lim_{c\to\infty}\frac{\left(\frac{2c}{(1-\delta)\mathrm{e}}\right)^{\frac{2}{1-\delta}}}{\left(\frac{c(1+\delta)}{\mathrm{e}(1-\delta)}\right)^{\frac{1+\delta}{1-\delta}}c^{\frac{1}{2c}}\left(\frac{c}{\mathrm{e}}\right)} \\
            &= \frac{\left(\frac{2}{(1-\delta)\mathrm{e}}\right)^{\frac{2}{1-\delta}}}{\left(\frac{1+\delta}{(1-\delta)\mathrm{e}}\right)^{\frac{1+\delta}{1-\delta}}} \mathrm{e} \\
            &= \mbox{III}_\infty(\delta)
        \end{align*}
    \noindent{Plotting $\mbox{I}_\infty(\delta)$, $\mbox{II}_\infty(\delta)$, and $\mbox{III}_\infty(\delta)$ on $0<\delta<1$ reveals that the lowest point which still lies above all three curves occurs at approximately $(0.280041,6.14182)$, the intersection of $\mbox{I}_\infty$ and $\mbox{III}_\infty$. Thus, the optimal value for $\delta$ in the infinite case is $\delta\approx 0.280041$. There is good reason to believe this $\delta$ is optimal for finite $n$ as well. \\}

    \noindent{\emph{Part 3: The finite case.} With the value of $\delta$ now specified, we return to our Conditions I, II, and III. With $\delta\approx 0.280041$, our Conditions become:}
        \[ \textrm{I}: \lambda>6.14182\varepsilon \textrm{\hspace{10mm}} \textrm{II}: \lambda>\varepsilon \left(        \frac{1.28004n}{2+0.719959} \right) \textrm{\hspace{10mm}} \textrm{III}: \lambda>\varepsilon\cdot\left(\frac{1.33339{{n}\choose{c}}}{1-\left(\frac{\varepsilon}{\lambda}\frac{n-c}{c+1}\right)^2}\right)^{\frac{1}{c}} \]
    \noindent{where $c=\lceil0.35998n\rceil$. However, by Condition I we know that at least $\varepsilon< 0.162818 \lambda$, and hence we can replace Condition III with a simplified variant:}
        \[ \lambda>\varepsilon\left(\frac{1.33339{{n}\choose{c}}}{1-0.0265098\left(\frac{n-c}{c+1}\right)^2}\right)^{\frac{1}{c}}. \]
    \noindent{All of these are explicit conditions on $\lambda$ in terms of $n$. An analysis reveals that I $>$ II and I $>$ III for all $n$, with III tending to I from below as $n\to\infty$ (by \emph{Part 2}). Hence, picking $\lambda>6.15\varepsilon$ gives the result of the theorem.}
    \end{proof}

\subsection{Proof of Theorem \ref{prop:RootDistribution}}

\noindent{We first introduce a useful lemma and prove it.}
    \begin{lemma}[Maximum Coefficient]
   \noindent{ \textit{Let $n>2$ be an integer, and let $\mathcal{M}>0$. Then, the function $$A(k)=\mathcal{M}^k{n \choose k},$$ where $1\leq k\leq n-1$, attains its maximum value at $k=m$, where $$m=\min \left\{ n-1, \max \left\{ \left\lceil \frac{\mathcal{M} n-1}{\mathcal{M}+1} \right\rceil, 1 \right\} \right\}.$$}\\}
    \noindent{\textit{Furthermore, $A(k)$ is increasing for $k < m$ and decreasing for $k > m.$}}
\end{lemma}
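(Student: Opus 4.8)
The plan is to run the standard ratio test for unimodality of a product of a geometric term and a binomial coefficient, exactly in the spirit of the geometric-series estimate in the proof of Theorem \ref{thm: ExponentialConvergence}. First I would form the consecutive ratio
\[ U(k) := \frac{A(k+1)}{A(k)} = \mathcal{M}\cdot\frac{n-k}{k+1}, \qquad 1 \le k \le n-2, \]
and observe that $U$ is strictly decreasing in $k$ on this range. This already forces $A$ to be unimodal: it can switch from nondecreasing to strictly decreasing at most once, so the maximizer is completely determined by where $U(k)$ crosses the value $1$.

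Next I would solve the threshold inequality. Clearing denominators, $U(k) \ge 1$ is equivalent to $\mathcal{M}(n-k) \ge k+1$, i.e.\ to $k \le t$ where $t := \frac{\mathcal{M}n-1}{\mathcal{M}+1}$, and likewise $U(k) > 1 \iff k < t$ and $U(k) < 1 \iff k > t$. Translating back, $A(k) \le A(k+1)$ precisely when $k \le t$ and $A(k) > A(k+1)$ precisely when $k > t$. If $t \notin \mathbb{Z}$ this pins the unconstrained maximizer to $\lceil t \rceil$ with strict increase below it and strict decrease above it; if $t \in \mathbb{Z}$ the maximum is attained at both $t$ and $t+1$, and $\lceil t \rceil = t$ is still a maximizer with $A$ strictly increasing for $k < t$ and $A(k) \ge A(k+1)$ for $k \ge t$.

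Then I would fold in the constraint $1 \le k \le n-1$ through a three-way case split. If $t \le 1$ (equivalently $\mathcal{M}(n-1) \le 2$), then $U(k) \le 1$ for every admissible $k$, so $A$ is nonincreasing on $\{1,\dots,n-1\}$ and its maximum is at $k=1$; note $\max\{\lceil t\rceil,1\}=1$ here, which is why the outer $\max$ with $1$ appears. If $t \ge n-1$ (equivalently $\mathcal{M} \ge n$), then $U(k) \ge 1$ for all $k \le n-2$, so $A$ is nondecreasing and the maximum is at $k=n-1$, matching the outer $\min$ with $n-1$. In the remaining case $1 < t < n-1$, the value $\lceil t \rceil$ already lies in $\{2,\dots,n-1\}$, so no clamping occurs and $m = \lceil t\rceil$ directly. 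Collecting the three cases gives exactly $m=\min\{n-1,\max\{\lceil t\rceil,1\}\}$, and the monotonicity statements are read off from the sign of $U(k)-1$ on the two sides of $m$.

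The argument is essentially routine; the only delicate points are bookkeeping ones. I expect the main (minor) obstacle to be keeping strict versus non-strict inequalities straight so that "increasing for $k<m$, decreasing for $k>m$" is literally correct — in particular the borderline situation $t\in\mathbb{Z}$, where $A(m)=A(m+1)$ and one must decide that "decreasing for $k>m$" refers to the behavior of $A$ from $k=m$ onward rather than a strict drop at the very first step. The secondary check is verifying that the inequalities $t\le 1$ and $t\ge n-1$ really do coincide with the two clamped regimes and that the endpoint values $k=1$ and $k=n-1$ are then genuinely optimal; both reduce to the already-established monotonicity of $U$.
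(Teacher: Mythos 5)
Your proposal is correct and follows essentially the same route as the paper: the paper analyzes the sign of the difference $S(k)=A(k+1)-A(k)$, which reduces to the same linear threshold $\mathcal{M}n-1-k(\mathcal{M}+1)$ that your ratio $U(k)=A(k+1)/A(k)$ produces, and then handles the endpoint cases $k=1$ and $k=n-1$ just as you do with the clamping to $\{1,\dots,n-1\}$. Your explicit treatment of the borderline case $t\in\mathbb{Z}$ (where $A(m)=A(m+1)$) matches the paper's remark that both $k$ and $k+1$ can be maxima, so there is no substantive difference.
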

    \begin{proof}

    \noindent{Consider the function $S(k)=A(k+1)-A(k)$, which is analogous to the derivative of $A(k)$. By simple calculation,}
        \begin{align*}
             S(k)=A(k+1)-A(k)&=\frac{n!\mathcal{M}^{k+1}}{(k+1)!(n-k-1)!}-\frac{n!\mathcal{M}^k}{k!(n-k)!} \\
             &=\frac{n!\mathcal{M}}{(k+1)!(n-k)!}(\mathcal{M}n-1-k(\mathcal{M}+1)).
        \end{align*}
    \noindent{The funtion which determines the sign of $S(k)$ is a linear function in $k$, meaning that $A(k)$ has exactly one maximum over $1\leq k\leq n-1$ (or some $k$ such that both $k$ and $k+1$ are maxima). Moreover, the slope of that linear function is negative, so $A(k)$ is increasing for $k<m$ and decreasing for $k>m$. For $k=1$, if $S(k) \leq 0$, then $S(k)<0$ for all subsequent $k$, so the maximum should occur at $k=1$. $S(k) \leq 0$ implies that}
        \[ \frac{\mathcal{M}n-1}{\mathcal{M}+1} \leq 0. \]
    \noindent{Thus, $m=1$ according to the formula, as expected. If, on the other hand, $S(1)>0$, then we check the other boundary. If $S(n-2)> 0$, then we expect the maximum to occur at $k=n-1$. $S(n-2)> 0$ implies that}
        \[ \frac{\mathcal{M} n-1}{\mathcal{M}+1}>n-2 \]
    \noindent{which is precisely what we need to show. Thus,}
        \[ \left\lceil \frac{\mathcal{M} n-1}{\mathcal{M}+1} \right\rceil \geq n-1 \]
    \noindent{so the formula gives $m=n-1$, as expected. If, instead, $S(n-2)=0$, then we expect both $k=n-2$ and $k=n-1$ to be maxima. $S(n-2)=0$ implies that}
        \[ \frac{\mathcal{M} n-1}{\mathcal{M}+1}=n-2 \]
    \noindent{and hence the formula gives $m=n-2$, as expected. Now that the boundary cases have been checked, assume that $S(1)>0$ and that $S(n-2)<0$. This means that the maximum of $A(k)$ occurs for some $2 \leq k \leq n-2$. We know that the maximum would occur for the first value of k such that $S(k)<0$ (call that value $m$), since in that case for all previous $k<m$'s, either $S(k)>0$, in which case that $k$ is not the maximum, or $S(k)=0$, in which case $k=j-1$, and both $k$ and $j$ are the maxima. We know that the sign component of $S(k)$, $s(k)=\mathcal{M}n-1-k(\mathcal{M}+1)$, when considered as a linear function $s(x)$, changes its sign at $s(x)=0$, or, equivalently, at} 
        \[ x=\frac{\mathcal{M}n-1}{\mathcal{M}+1}. \]
    \noindent{Hence, the first $k$ for which $S(k)<0$ is precisely}
        \[ k = \left\lceil \frac{\mathcal{M} n-1}{\mathcal{M}+1} \right\rceil \]
    \noindent{as desired.}
    
    \end{proof}
    
	\noindent{Now we prove Theorem 2.2, the exponential convergence of the polynomials in Dirichlet space, when their roots have $n$-th power mean  sufficiently small.}
	
	\begin{proof} Applying Lemma 5.2, we wish to show that
        \[ \frac{1}{2} \| F_{\lambda}(z) \|_{\mathcal{D}}^2 =  \frac{1}{2} \sum_{k=1}^{n} |c_k|^2 k \geq \| G(z) \|_{\mathcal{D}}^2 =\sum_{k=1}^{n-1} |c_{n-k}|^2k = \sum_{k=1}^{n} |c_k|^2 (n-k)\]
        \noindent{when} \[\left(\sqrt[n]{\frac{1}{n}\sum_{i=1}^n |\alpha_i/\lambda|^n|}\right) \le \frac{4}{27}.\]
    \noindent{Suppose $n > 2$ and $\max |\alpha_i| < \varepsilon$. Combining both sides of the inequality gives us}
        \[ \sum_{k=1}^{n} |c_k|^2 \left( \frac{3}{2} k - n \right) \geq 0 \]
    \noindent{which can be split into}
        \[ \sum_{k=\lfloor\frac{2}{3}n\rfloor +1}^{n} |c_k|^2 \left( \frac{3}{2} k - n \right) \geq \sum_{k=1}^{\lfloor\frac{2}{3} n\rfloor} |c_k|^2 \left( n - \frac{3}{2} k \right) \]
    \noindent{By Lemma 5.1, we can bound the right-hand side above by}
        \[ \sum_{k=1}^{\lfloor\frac{2}{3} n\rfloor} |c_k|^2 \left( n - \frac{3}{2} k \right) \leq \sum_{k=1}^{\lfloor\frac{2}{3}n\rfloor} \left[{n \choose k} \left(\frac{1}{n}\sum_{i=1}^n |\alpha_i/\lambda|^n|\right)^{1-k/n}\right]^2 \cdot  \left( n - \frac{3}{2} k \right). \]
       \noindent{Note that Lemma 5.3 states that} 
            \[ \binom{n}{k}\left(\sqrt[n]{\frac{1}{n}\sum_{i=1}^n |\alpha_i/\lambda|^n|}\right)^{-k} \]
       \noindent{is maximized at $k=m,$ where}
            \[ \mathcal{M} = \left(\sqrt[n]{\frac{1}{n}\sum_{i=1}^n |\alpha_i/\lambda|^n|}\right)^{-1} \ge \frac{27}{4}, \]
        \noindent{and}
       \begin{align*}
             m&=\min \left\{ n-1, \max \left\{ \left\lceil \frac{\mathcal{M} n-1}{\mathcal{M}+1} \right\rceil, 1 \right\} \right\} \\
             &\le \min \left\{ n-1, \max \left\{ \left\lceil \frac{27n-4}{31} \right\rceil, 1 \right\} \right\} \le \frac{2}{3}n.
        \end{align*}
       
      \noindent{Lemma 5.3 says more than that; it says that the function is increasing for $k < m.$ In particular, the function} 
        \[ \binom{n}{k}\left(\sqrt[n]{\frac{1}{n}\sum_{i=1}^n |\alpha_i/\lambda|^n|}\right)^{n-k} \]
        \noindent{from $k = 1$ to $k = \lfloor\frac{2}{3}n\rfloor$ is maximized at $k = \lfloor\frac{2}{3}n\rfloor.$ Thus, we can bound,}
        \begin{align*}
            \sum_{k=1}^{\lfloor\frac{2}{3} n\rfloor} |c_k|^2 \left( n - \frac{3}{2} k \right) &\leq \sum_{k=1}^{\lfloor\frac{2}{3}n\rfloor} \binom{n}{\frac{2n}{3}}^2\left(\frac{1}{n}\sum_{i=1}^n |\alpha_i/\lambda|^n|\right)^{\frac{2}{3}} \cdot  \left( n - \frac{3}{2} k \right) \\
            &\leq{n \choose \frac{2n}{3}}^2 \left( \frac{1}{n}\sum_{i=1}^n |\alpha_i/\lambda|^n|\right)^{\frac{2}{3}} \cdot \frac{n^2}{2}.
        \end{align*}
       \noindent{According to Stirling, we have the following bound}
       \[\sqrt{2\pi}n^{n+\frac{1}{2}}e^{-n}\le n! \le en^{n+\frac{1}{2}}e^{-n}\]
       \noindent{which bounds $\binom{n}{\frac{2n}{3}}$ above}
       \begin{align*}\frac{n!}{(2n/3)!(n/3)!} &\le \frac{en^{n+\frac{1}{2}}e^{-n}}{\left(\sqrt{2\pi}(\frac{2n}{3})^{\frac{2n}{3}+\frac{1}{2}}e^{-\frac{2n}{3}}\right)\left(\sqrt{2\pi}(\frac{n}{3})^{\frac{n}{3}+\frac{1}{2}}e^{-\frac{n}{3}}\right)} \\
       &\le  \frac{e}{2\pi}\left(\frac{1}{2}\right)^{\frac{2}{3}n+\frac{1}{2}}3^{n+1}\cdot\frac{1}{\sqrt{n}} .\end{align*}
       \noindent{Our above expression is therefore bounded above by}
        \[ \left( \frac{e}{2\pi}\left(\frac{1}{2}\right)^{\frac{2}{3}n+\frac{1}{2}}3^{n+1}\cdot\frac{1}{\sqrt{n}} \right)^2 \left( \frac{1}{n}\sum_{i=1}^n |\alpha_i/\lambda|^n| \right)^{\frac{2}{3}} \cdot \frac{n^2}{2}. \]
    \noindent{Similarly, we can bound the left-hand side below by}
        \[ \frac{n}{2} \leq \sum_{k=\lfloor\frac{2}{3}n\rfloor +1}^{n} |c_k|^2 \left( \frac{3}{2} k - n \right). \]
    \noindent{Combining these two, it suffices to show}
        \[ \frac{n}{2} \geq \left( \frac{e}{2\pi}\left(\frac{1}{2}\right)^{\frac{2}{3}n+\frac{1}{2}}3^{n+1}\cdot\frac{1}{\sqrt{n}} \right)^2 \left( \frac{1}{n}\sum_{i=1}^n |\alpha_i/\lambda|^n| \right)^{\frac{2}{3}} \cdot \frac{n^2}{2}\]
    \noindent{which holds true for}
    \[\sqrt[n]{\frac{1}{n}\sum_{i=1}^n |\alpha_i/\lambda|^n} \le \frac{4}{27}\left(\frac{2\sqrt{2}\pi}{3e}\right)^{3/n}.\]
    
    \noindent{In particular, this is satisfied when} 
    
    \[\lambda \geq 6.75\sqrt[n]{\frac{1}{n}\sum_{i=1}^n |\alpha_i|^n}\]
    
    \noindent{proving Theorem 2.2. One can also slightly generalize Theorem 2.1 in another way: by appending a large root. Consider $F_\lambda(z)(z-M)=B(z)\cdot G(z),$ where $|M|>1,\delta<1$ and}
    \[|M|\delta \ge \binom{n}{n/2}\cdot \left(\frac{\varepsilon}{\lambda}\right)^{n/2} \ge |c_k|\]
    
    \noindent{for all $0 \le k \le n-1.$ Observe that the coefficient of $z^k$ in $(z-M)F_\lambda(z)$ is now   $c_{k-1}-Mc_k$, which satisfies}
    \[|c_{k-1}-Mc_k| \le |Mc_k| + |M\delta| \le |M||c_k+\delta|.\]
    
    \noindent{Furthermore, the coefficient of $z^n$ is now $c_{n-1}-M$, which satisfies}
    \[|c_{n-1}-M| \ge |M|-|c_{n-1}| \ge |M|-|M\delta| = |M(1-\delta)|.\]

    \noindent{Through similar algebra as in the proof for the first part of the statement, one finds that the desired inequality on the Dirichlet norms holds true for}
    \[\sqrt[n]{\frac{1}{n}\sum_{i=1}^n |\alpha_i/\lambda|^n} \le \frac{4}{27}\left[\frac{2\sqrt{2}\pi}{3e} \cdot \left(1-\delta \sqrt{n}-\delta \right)\right]^{3/n}.\]

	\end{proof}

\subsection{Proof of Proposition \ref{thm: Vlad}}

    \noindent{We first prove the following Lemma.}
    
    \begin{lemma}
       \noindent{Let $F$ be a monic polynomial of degree $n$. Then $||F||^2_\mathcal{D} \geq n$.}
    \end{lemma}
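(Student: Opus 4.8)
\emph{Proof plan.} This is essentially a one-line observation, so the ``plan'' is just to set it up cleanly. Write $F$ in terms of its Taylor coefficients, $F(z)=\sum_{k=0}^{n}c_k z^k$, and note that monicity of a degree-$n$ polynomial forces $c_n=1$. By the definition of the Dirichlet norm recalled above,
\[ \|F\|_{\mathcal{D}}^2=\sum_{k=1}^{n}k\,|c_k|^2. \]
Since every summand is nonnegative, the sum is bounded below by its top term alone, namely $n\,|c_n|^2=n$. This yields $\|F\|_{\mathcal{D}}^2\geq n$, as claimed.

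There is no genuine obstacle here; the only point worth recording is that equality holds precisely when $c_k=0$ for all $1\leq k\leq n-1$, i.e. when $F(z)=z^n+c_0$ — which matches the recurring heuristic in this paper that $z^n$ (up to an additive constant, to which $\|\cdot\|_{\mathcal{D}}$ is blind) is the extremal case. I would then use this lemma as the denominator bound in the proof of Proposition \ref{thm: Vlad}: it guarantees $\tfrac12\|F\|_{\mathcal{D}}^2\geq n/2$, so that the target inequality $\|\lambda^{-n}G\|_{\mathcal{D}}^2\leq \tfrac12\|F\|_{\mathcal{D}}^2$ can be deduced from an absolute upper bound on $\|\lambda^{-n}G\|_{\mathcal{D}}^2$ that decays with $\lambda$ faster than $n/2$, uniformly in the (fixed) coefficients of $F$.
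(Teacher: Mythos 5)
Your proof is correct and is essentially the paper's argument: both isolate the top term $n|c_n|^2=n$ of the Dirichlet norm (using monicity) and drop the remaining nonnegative terms. The additional remarks on the equality case and the role in Proposition \ref{thm: Vlad} are consistent with how the paper uses the lemma.
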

    \begin{proof}
        \noindent{By definition,}
            \[ \| f \|_{\mathcal{D}}^2 = \sum_{k=1}^{n} k |c_k|^2 = n + \sum_{k=1}^{n-1} k |c_k|^2. \]
        \noindent{Since $\forall 1\leq k \leq n-1$, $k|c_k|^2 \geq 0$, the result follows.}
    \end{proof}
    \noindent{Now we prove the Proposition.}
    \begin{proof}
        \noindent{Consider the decompositions $F(z) = B_{\lambda}(z) \cdot G(z)$ and $F_{\lambda^2}(z) = B(z) \cdot G_{\lambda^2}(z)$. For sufficiently large $\lambda \in \R$, the function $F_{\lambda^2}(z)$ approaches $z^n$, so by continuity we know that}
            \[ \left\| F_{\lambda^2}(z) \right\|_{\mathcal{D}}^2 \longrightarrow \left\| z^n \right\|_{\mathcal{D}}^2 = n. \]
        \noindent{Also, by Proposition \ref{Blaschke-Blaschke Equivalency}, we know that}
            \[ \frac{F_{\lambda^2}(z)}{B(z)} = \frac{1}{\lambda^n} \cdot \frac{F(z)}{B_{\lambda}(z)}. \]
        \noindent{Hence, we have}
            \[ \left\| \frac{1}{\lambda^n} \cdot \frac{F(z)}{B_{\lambda}(z)} \right\|_{\mathcal{D}}^2 \leq \frac{n}{2}. \]
        \noindent{However, by Lemma 5.1, we know that}
            \[ n \leq \left\| F(z) \right\|_{\mathcal{D}}^2.  \]
        \noindent{Therefore, we find that}
            \[ \left\| \frac{1}{\lambda^n} \cdot \frac{F(z)}{B_{\lambda}(z)} \right\|_{\mathcal{D}}^2 \leq \frac{1}{2} \left\| F(z) \right\|_{\mathcal{D}}^2. \]
    \end{proof}

\subsection{Proof of Theorem \ref{ScalingCorollary}}
	
    \begin{proof}
        \noindent{Suppose that $\max \{|\alpha_i|\} < \gamma < \lambda$ and note that then}
            \[ F_{\gamma}(\gamma z/\lambda) = \prod_{i=1}^{n} \left( \frac{\gamma}{\lambda} z - \frac{\alpha_i}{\gamma} \right) = \left( \frac{\gamma}{\lambda} \right)^n \prod_{i=1}^{n} \left( z - \frac{\lambda}{\gamma^2} \alpha_i \right).\] 
        \noindent{Therefore,}    
            \[ G_{\gamma}(z)=\left( \frac{\gamma}{\lambda} \right)^n \prod_{i=1}^{n} \frac{1}{\lambda} \left( \lambda^2 - \frac{\lambda}{\gamma^2} \overline{\alpha_i}z \right) \]
        \noindent{since $|\alpha_i| < \lambda, \gamma$ for all $i \in I$. Via direct computation we find that}
            \[ G_{\lambda} = \prod_{i=1}^{n} \frac{1}{\gamma} \left( \gamma^2 - \frac{\overline{\alpha_i}}{\lambda} z \right) = \prod_{i=1}^{n} \gamma \left( 1 - \frac{\overline{\alpha_i}}{\lambda \gamma^2} z \right) = \left( \frac{\gamma}{\lambda} \right)^n \prod_{i=1}^{n} \frac{1}{\lambda} \left( \lambda^2 - \frac{\lambda}{\gamma^2} \overline{\alpha_i}z \right) = G_{\gamma} (z).\]
        \noindent{Therefore, if all roots are captured during $\gamma$-Blaschke and $\lambda$-Blaschke factorization,}
            \[ G_{\gamma}(z) = G_{\lambda} (z).\]
        \noindent{In the second scenario, let $m$ be such that $|\alpha_i| < \gamma \lambda$ for $i=1$ to $i=m$, and $|\alpha_i| > \gamma \lambda$ for $i=m+1$ to $i=n$. We deduce that}
           \begin{align*} 
            \gamma^n G_{\gamma}(\lambda z) &= \gamma^n \prod_{i=1}^{m} \frac{1}{\lambda} \left( \lambda^2 - \frac{\overline{\alpha_i}}{\gamma} \left( \lambda z \right) \right) \prod_{i=m+1}^n \left(\lambda z - \frac{\alpha_i}{ \gamma}\right)\\&=\gamma^n \prod_{i=1}^{m} \left( \lambda - \frac{\overline{\alpha_i}}{\gamma} z \right) \prod_{i=m+1}^n \left(\lambda z - \frac{\alpha_i}{ \gamma}\right) \\ &= \gamma^n \prod_{i=1}^{m} \frac{\lambda}{\gamma} \left( \gamma - \frac{\overline{\alpha_i}}{\lambda} z \right)\prod_{i=m+1}^n \frac{\lambda}{\gamma}\left(\gamma z - \frac{\alpha_i}\lambda\right)  \\ &= \lambda^n \prod_{i=1}^{m} \left( \gamma - \frac{\overline{\alpha_i}}{\lambda} z \right)\prod_{i=m+1}^n \left(\gamma z -\frac{\alpha_i}{\lambda}\right) = \lambda^n G_{\lambda} (\gamma z).
        \end{align*}
        \noindent{Therefore, we have the alternate relation by which, without any assumptions on the norms of the roots,}
            \[\gamma^n G_{\gamma}(\lambda z) = \lambda^n G_{\lambda} (\gamma z).  \]
    \end{proof}

\subsection{Proof of Proposition \ref{thm: ShrunkRoots}}
    
    \begin{proof}
        \noindent{Consider Theorem \ref{ScalingCorollary} and let $\gamma = 1$. Then, the equations hold.}
    
    \end{proof}
    
\subsection{Proof of Proposition \ref{def: LogDeriv}}
        \begin{proof}
            \noindent{Firstly, recall that}
                \begin{align*}
                    B_r(z) &= \prod_{i=1}^{n} \frac{(z-\alpha_i)r}{(r^2 - \overline{\alpha}z)}.
                \end{align*}
            \noindent{Taking the derivative and applying the Chain Rule, we find that}
            \begin{align*}
                B_r'(z) &= \sum_{j=1}^{n} \frac{r(r^2-|\alpha_j|^2)}{(r^2 - \overline{\alpha_j}z)^2} \prod_{\substack{i=1 \\ i \neq j}}^{n} \frac{(z-\alpha_i)r}{(r^2 - \overline{\alpha_i}z)}.
            \end{align*}
        \noindent{Dividing through by an $r$-Blaschke product, we obtain the logarithmic derivative}
            \begin{align*}
                \frac{B_r'(z)}{B_r(z)} &= \sum_{j=1}^{n} \frac{r(r^2-|\alpha_j|^2)}{(r^2 - \overline{\alpha_j}z)^2} \frac{(r^2 - \overline{\alpha_j}z)}{(z-\alpha_j)r} = \sum_{j=1}^{n} \frac{(r^2-|\alpha_j|^2)}{(r^2 - \overline{\alpha_j}z)(z-\alpha_j)}.
            \end{align*}
            \noindent{Let $z = re^{it}$. Then,}
            \begin{align*}
                \frac{B_r'(re^{it})}{re^{-it} B_r(re^{it})} &= \sum_{j=1}^{n} \frac{(r^2-|\alpha_j|^2)}{\overline{z}(r^2 - \overline{\alpha_j}z)(z-\alpha_j)} \\
                &= \sum_{j=1}^{n} \frac{(r^2-|\alpha_j|^2)}{r^2(\overline{z} - \overline{\alpha_j})(z-\alpha_j)} \\
                &= \sum_{j=1}^{n} \frac{(r^2-|\alpha_j|^2)}{r^2 |z - \alpha_j|^2}.
            \end{align*}
            \noindent{Therefore,}
            \begin{align*}
                |B_r'(re^{it})| &= \sum_{j=1}^{n} \frac{(r^2-|\alpha_j|^2)}{r |z - \alpha_j|^2}.
            \end{align*}
        \end{proof}

\subsection{Proof of Theorem \ref{OneStepR}}

    \begin{proof}
         Recall that, if $|z| = r$, then $|z-a|^2 = r^{-2}|r^2 - \overline{a} z |^2$. Now, let
            \[ f = (z-\alpha)F \] 
        \noindent{and}
            \[ g = \frac{1}{r} \left( r^2 - \overline{\alpha} z \right) F. \]
        \noindent{Then,}
            \[ f' = F + (z-\alpha) F' \]
        \noindent{and}
            \[ g' = - \frac{\overline{\alpha}}{r} F + \frac{1}{r} \left( r^2 - \overline{\alpha}z \right) F'. \]
        \noindent{Via direct computation, we find that}
            \begin{align*}
                |f'|^2 &= |F|^2 + F \overline{(z-a)F'} + \overline{F} (z-a)F' + |z-a|^2 |F'|^2
            \end{align*}
        \noindent{and}
            \begin{align*}
                |g'|^2 &= \frac{|a|^2}{r^2} |F|^2 - \frac{a}{r^2} \overline{F} \left( r^2 - \overline{a} z \right) F' - \frac{\overline{a}}{r^2} F \overline{\left( r^2 - \overline{a}z \right) F'} + \frac{1}{r^2} \left| r^2 - \overline{a}z \right| |F'|^2.
            \end{align*}
        \noindent{Thus, integrating over the boundary of a disk of radius $r$, we find that}
            \begin{align*}
                \int_{\partial \D_r} |f'|^2 - \int_{\partial \D_r} |g'|^2 &= \left( 1 - \frac{|a|^2}{r^2} \right) \int_{\partial \D_r} |F|^2 + \int_{\partial \D_r} \left[ F \overline{(z-a)F'} + \overline{F} (z-a)F' \right] \\
                & \hspace{37mm} + \left. \frac{a}{r^2} \overline{F} \left( r^2 - \overline{a}z \right) F' + \frac{\overline{a}}{r^2} F \overline{\left( r^2 - \overline{a}z \right)F'} \right] \\
                &= \left( 1 - \frac{|a|^2}{r^2} \right) \int_{\partial \D_r} |F|^2 + \left( 1 - \frac{|a|^2}{r^2} \right) \int_{\partial \D_r} \left[ F\overline{F'}\overline{z} + \overline{F} F' z \right] \\
                & \geq \left( 1 - \frac{|a|^2}{r^2} \right) \int_{\partial \D_r} |F|^2.
            \end{align*}
        \noindent{Therefore,}
            \[ \int_{0}^{2\pi} \left| g'(re^{it}) \right|^2 \: dt \leq \int_{0}^{2\pi} \left| f'(re^{it}) \right|^2 \: dt - \left( 1 - \frac{|a|^2}{r^2} \right) \int_{0}^{2\pi} \left| F(re^{it}) \right|^2 \: dt. \] \\
        \end{proof}
        
        \section*{Acknowledgements}

        \noindent{The authors were supported by the 2018 Summer Math Research at Yale (SUMRY) program. The authors would like to thank Stefan Steinerberger and Hau-Tieng Wu for their endless mentorship and Lihui Tan for helpful feedback on a preliminary draft of this paper.}

\bibliography{Blaschke}
\bibliographystyle{alpha}

\end{document}